%section{Preamble}

\documentclass[letter,11pt,reqno]{amsart}

\usepackage[utf8]{inputenc}

\usepackage{etex}

\usepackage{xcolor}

\definecolor{verydarkblue}{rgb}{0,0,0.5}

\usepackage[
    breaklinks,
    colorlinks,
    citecolor=verydarkblue,
    linkcolor=verydarkblue,
    urlcolor=verydarkblue,
    pagebackref=true,
    hyperindex
]{hyperref}

\backrefenglish

\usepackage{fancyhdr}

\usepackage[
    % margin=1.3in,
    hscale=0.7,
    vscale=0.75,
    headheight=13pt,
    centering,
    % showframe
]{geometry}

\usepackage{amsmath}
\usepackage{amsthm}
\usepackage{amssymb}
\usepackage{mathtools}
\usepackage{bm}		%for boldface greek symbols
\usepackage{mathdots}
\usepackage{framed}
\usepackage[capitalize]{cleveref}
\usepackage{array}
\usepackage[alphabetic,msc-links]{amsrefs}
\usepackage[all,cmtip]{xy}

\theoremstyle{plain}

\newtheorem{introtheorem}{Theorem}

\crefname{introtheorem}{Theorem}{Theorems}

\crefname{introcorollary}{Corollary}{Corollaries}

\newtheorem{theorem}{Theorem}[section]
\newtheorem*{theorem*}{Theorem}
\newtheorem{proposition}[theorem]{Proposition}
\newtheorem{lemma}[theorem]{Lemma}
\newtheorem{corollary}[theorem]{Corollary}

\theoremstyle{definition}

\newtheorem{definition}[theorem]{Definition}

\theoremstyle{remark}

\newtheorem{remark}[theorem]{Remark}
\newtheorem{example}[theorem]{Example}

\numberwithin{figure}{section}

\numberwithin{equation}{section}

%\IfFileExists{./article-style.tex}{\input{article-style.tex}}{}

\usepackage{marginnote}

\def\Z{{\mathbb Z}}
\def\Q{{\mathbb Q}}
\def\R{{\mathbb R}}
\def\C{{\mathbb C}}

\def\A{{\mathbb A}}
\def\P{{\mathbb P}}

\def\I{\mathcal{I}}

\def\O{\mathcal{O}}

\def\fa{\mathfrak{a}}
\def\fb{\mathfrak{b}}

\def\fm{\mathfrak{m}}

\def\a{\alpha}
\def\b{\beta}
\def\g{\gamma}

\def\ff{\psi}
\def\e{\eta}

\def\n{\nu}
\def\m{\mu}

\def\p{\pi}

\def\D{\Delta}
\def\G{\Gamma}
\def\S{\Sigma}
\def\Om{\Omega}

\def\.{\cdot}
\let\circum\^
\def\^{\widehat}
\def\~{\widetilde}

\def\ov{\overline}

\def\({\left(}
\def\){\right)}

\def\*{{}^*}

%completed tensor product

\renewcommand{\and}{ \ \ \text{ and } \ \ }

\def\reg{\mathrm{reg}}

\def\red{\mathrm{red}}

\def\Jac{\mathrm{Jac}}

\DeclareMathOperator{\codim} {codim}

\DeclareMathOperator{\Spec} {Spec}

\DeclareMathOperator{\Sing} {Sing}

\DeclareMathOperator{\val} {val}

\DeclareMathOperator{\mult} {mult}
\DeclareMathOperator{\ord} {ord}

\DeclareMathOperator{\Sym} {Sym}

\DeclareMathOperator{\lct} {lct}

\DeclareMathOperator{\jetcodim} {jet-codim}

\DeclareMathOperator{\mld} {mld}

\def\embdim{\mathrm{edim}}
\def\embcodim{\mathrm{ecodim}}

\def\jetcodim{\mathrm{jet.codim}}

\begin{document}

\title
[Families of jets of arc type]
{Families of jets of arc type and higher (co)dimensional Du Val singularities}

\author{Tommaso de Fernex}
\address{Department of Mathematics, University of Utah, Salt Lake City, UT 84112, USA}
\email{{\tt defernex@math.utah.edu}}

\author{Shih-Hsin Wang}
\address{Department of Mathematics, University of Utah, Salt Lake City, UT 84112, USA}
\email{{\tt shwang@math.utah.edu}}

\dedicatory{In memory of Jean-Pierre Demailly}

\subjclass[2020]{Primary 14E18, Secondary 14B05}
\keywords{Jet scheme, arc space, Nash problem, rational singularity}

\thanks{%
Research of the first author partially supported by NSF Grant DMS-2001254.
}

\begin{abstract}
Families of jets through singularities of algebraic varieties are here 
studied in relation to the families of arcs originally studied by Nash.
After proving a general result relating them, 
we look at normal locally complete intersection varieties with rational singularities 
and focus on a class of singularities we call \emph{higher Du Val singularities}, 
a higher dimensional (and codimensional) version of Du Val singularities that is
closely related to Arnold singularities.
More generally, we introduce the notion of \emph{higher compound Du Val singularities}, 
whose definition parallels that of compound Du Val singularities.
For such singularities, we prove that there exists
a one-to-one correspondence between families of arcs
and families of jets of sufficiently high order through the singularities.
In dimension two, the result partially recovers a theorem of Mourtada
on the jet schemes of Du Val singularities.
As an application, we give a solution of the Nash problem for 
higher Du Val singularities. 
\end{abstract}

\maketitle

\section{Introduction}

The space of arcs through 
the singular locus of a complex variety decomposes into a finite union
of irreducible components, each defining a distinct divisorial
valuation, that is, a prime divisor on some resolution of singularities.
These components were studied by Nash \cite{Nas95};
we will refer to them as \emph{Nash families of arcs},
and to the valuations they define as \emph{Nash valuations}.
The problem of characterizing Nash families of arcs in terms of resolutions
of singularities fits within the \emph{Nash problem}, which was motivated by the desire of 
understanding what different resolutions would have in common.

It is natural to ask whether a similar picture
holds for families of jets through the singular locus, 
at least when one looks at jets of sufficiently high order. 
(For clarity of exposition, in this introduction we restrict the discussion to the case 
where families of arcs and families of jets all stem from the singular locus of the variety; 
we refer to the main body of the paper for a more general formulation of the question.)
As jets are parametrized by schemes of finite type, the fact that there are finitely many
irreducible components of the set of jets of fixed order through the singular locus
is clear. The question is how the families of 
jets defined by such components relate to the families of arcs identified by Nash. 

Even though families of jets are introduced similarly to families of arcs, 
at the core there is an essential difference between the two: 
Nash families of arcs lift to resolutions of singularities and are naturally related to
divisorial valuations; by contrast, families of jets through singularities do not lift to resolutions 
and cannot be related to valuations in any obvious way.
In particular, the approach followed by Nash to study families of arcs using resolution of singularities
does not apply to finite order jets.

Families of jets have been computed in several concrete examples, see, e.g., 
the works on plane curves and surface singularities 
\cite{Mou11,Mou11b,LJMR13,Mou14,Mou17,MC18,CM21}; 
in many of these works, the computation is carried out through a direct analysis of the 
defining equations.
The problem of understanding families of jets
is closely related to the \emph{embedded Nash problem},
which aims to describe the irreducible components
of contact loci of effective divisors on smooth ambient varieties
in terms of embedded resolutions.
A breakthrough in this direction was recently made in \cite{BdlBdLFdBP22}, 
where the problem was solved for unibranched plane curves;
see also, e.g., \cite{Ish08,FdBPPPP17} for earlier
work on this problem.

The purpose of this paper is to unveil
a natural correspondence between families of arcs
and certain families of jets of sufficiently high order.
Our starting point is the following general property.

\begin{introtheorem}[\cref{t:welldefined-injective}]
\label{t:intro:welldefined-injective}
Among all families of jets of sufficiently high order stemming 
the singular locus of a variety, there is a selection of them
that is in natural one-to-one correspondence with the Nash families of arcs. 
\end{introtheorem}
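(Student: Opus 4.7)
My plan is to construct the correspondence explicitly via truncation maps. To each Nash family of arcs $C$ (an irreducible component of the preimage of $X_\sing$ in the arc space $X_\infty$), I would associate the Zariski closure $S_m(C) := \overline{\pi_m(C)}$ of its image in $X_m$ under the truncation map $\pi_m \colon X_\infty \to X_m$. The first task is to verify that this assignment is well-defined in the weak sense that $S_m(C)$ is an irreducible closed subset of the set of $m$-jets through the singular locus; this part is essentially automatic, since $C$ is irreducible closed in $X_\infty$, so its image is irreducible constructible in $X_m$ and its closure is contained in the preimage of $X_\sing$. To upgrade this to membership in the collection of families of jets intended by the paper, I anticipate needing a Greenberg-type stabilisation argument ensuring that for $m \gg 0$ the subset $S_m(C)$ is maximal: the generic point of any irreducible closed subset strictly containing $S_m(C)$ should lift to an arc through $X_\sing$, hence lie in some Nash family, which combined with the injectivity below would force $S_m(C)$ itself to be a family of jets.

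For injectivity, the key input is the inverse-limit presentation $X_\infty = \varprojlim_m X_m$, which dualises on coordinate rings to $\cO_{X_\infty} = \varinjlim_m \cO_{X_m}$. If $\gamma_j, \gamma_k$ are the generic arcs of two distinct Nash families $C_j, C_k$, their corresponding prime ideals $\fp_j, \fp_k \subseteq \cO_{X_\infty}$ are distinct and pairwise incomparable. Any element of $\fp_j \setminus \fp_k$ lies in some $\cO_{X_m}$, witnessing that $\pi_m(\gamma_j)$ is not a specialisation of $\pi_m(\gamma_k)$ in $X_m$; symmetrically for the other direction. Taking $m$ large enough to separate all pairs among the finitely many Nash families, the sets $S_m(C_j)$ become pairwise incomparable as irreducible closed subsets, and in particular distinct.

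The hard part, in my view, is the strong form of well-definedness: showing that each $S_m(C)$ is itself a maximal family of $m$-jets rather than a proper irreducible subset of one. This is precisely where the ``sufficiently high order'' hypothesis is essential, and it requires fine control over which subsets of the jet scheme lift to arcs through $X_\sing$, presumably via Greenberg's theorem or a cylinder argument. Once both well-definedness and injectivity are in place, the selection is simply the image of the map $C \mapsto S_m(C)$, and the assignment $C_j \leftrightarrow S_m(C_j)$ realises the desired bijection between Nash families of arcs and the selected families of jets.
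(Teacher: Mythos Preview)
Your injectivity argument is essentially the paper's (the paper phrases it via Greenberg's lifting rather than the direct-limit ideal argument, but the content is the same: for large $m$ the truncations of the distinct Nash generic points become pairwise incomparable).

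The gap is in well-definedness, and it stems from a misidentification of the map. You propose $C \mapsto S_m(C) = \overline{\psi_m(C)}$ and then try to show $S_m(C)$ is itself an irreducible component of $X_m^{\Sing X}$. The paper does \emph{not} claim this, and your sketched argument for it does not work: Greenberg's theorem only guarantees that an $m$-jet $\beta$ lifts to an arc after truncating to some lower level $n$ (one needs $\ord_\beta(\Jac_X) \le n$), so the generic point of a component $D \supsetneq S_m(C)$ need not be in the image of $\psi_m$ from $X_\infty$. There is no reason to expect $S_m(C)$ to be maximal.

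What the paper actually does is define $\Psi_m^{\Sing X}(C)$ to be the unique irreducible component $D$ of $X_m^{\Sing X}$ \emph{containing} $\psi_m(C)$; well-definedness then means uniqueness of such $D$, not maximality of $S_m(C)$. The uniqueness argument requires an ingredient you do not mention: Looijenga's result that the fibers of the truncation maps $\pi_{m,n} \colon X_m \to X_n$ are irreducible over $n$-jets with $\ord(\Jac_X) \le n$. If $\psi_m(C)$ were contained in two components $D,D'$, one truncates their generic points to a carefully chosen level $n$ (with $\nu \le n$ and $\mu + n \le m \le 2n$), uses Greenberg to lift those truncations to arcs, argues via the separation property at level $\nu$ that both arcs lie in $C$, concludes the truncations coincide, and then derives a contradiction from the irreducibility of the fiber $\pi_{m,n}^{-1}$ over that common truncation. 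Without the fiber-irreducibility step, the argument does not close.
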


The correspondence is obtained by defining, in a geometric meaningful way, an injective map
from the set of Nash families of arcs to the set of families of jets through the singular locus.
We say that a family of jets is \emph{of arc type} if it is in the image of this map.

We then address the question whether
all families of jets of sufficiently high order through the singular locus are of arc type. 
Although in general there are more families of jets compared to families of arcs
(see, e.g., the case of toric surface singularities \cite{Mou11b,Mou17}), 
we will show that there is a one-to-one correspondence 
for certain rational singularities of arbitrary dimensions. 
One case we already understand, thanks to \cite{Mou14}, is that of Du Val singularities,
where there is a one-to-one correspondence.
Here we extend the existence of such correspondence to
a large class of locally complete intersection rational singularities of arbitrary dimensions
which include isolated compound Du Val singularities. 

For every normal locally complete intersection variety $X$ 
there is a bound on embedding codimension in terms of 
minimal log discrepancy. The bound, which is proved in \cref{p:mld-bound}, is given by
\[
\embcodim(\O_{X,x}) \le \dim(\O_{X,x}) - \mld_x(X)
\] 
for every $x \in X$. We say that $X$ has \emph{maximal embedding codimension} at $x$
if the bound is achieved.
Within this class of singularities, we have those for which
\[
\mld_x(X) = \dim(\O_{X,x}) - \embcodim(\O_{X,x}) = 1.
\]
It is easy to see that these are isolated singularities.
We will see in a moment that these singularities have many properties
that are natural higher dimensional analogues of properties characterizing
Du Val singularities in dimension two (the analogy is also manifest
in the examples provided in \cref{p:eg-hDV}). For this reason, 
we call these singularities \emph{higher Du Val singularities}.
In dimension two, this class of singularities coincides with Du Val singularities.

We then look at rational singularities of maximal embedding dimension
that reduce to higher Du Val singularities 
under generic hyperplane sections. One should think of this condition as
an analogue of the definition of compound Du Val singularity. 
We call these singularities 
\emph{higher compound Du Val singularities}. 
We have the following result.

\begin{introtheorem}[\cref{t:cDV}]
\label{t:intro:cDV}
On an isolated higher compound Du Val singularity $x \in X$, 
all families of jets of sufficiently high order stemming from $x$ are of arc type. 
\end{introtheorem}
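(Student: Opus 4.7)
The plan is to establish surjectivity of the map of \cref{t:intro:welldefined-injective} in two steps: first directly for isolated higher Du Val singularities, and then for isolated higher compound Du Val singularities by reducing to the higher Du Val case via a generic hyperplane section. Throughout, let $J_m(X;x) \subset X_m$ denote the closed locus of $m$-jets of $X$ stemming from $x$, whose irreducible components are the families of jets in question.

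\textbf{Step 1 (higher Du Val case).} Assume first that $x \in X$ is an isolated higher Du Val singularity of dimension $d$, so $\mld_x(X) = 1$ and $\embcodim(\O_{X,x}) = d-1$. Since $X$ is a normal locally complete intersection with rational singularities, by Musta\c{t}a's theorem the ambient jet schemes $X_m$ are irreducible of the expected dimension $(m+1)d$, and the geometry of $J_m(X;x)$ is controlled by divisorial valuations via contact-loci formulas. I would use these formulas, together with the extremal numerical constraints $\mld_x(X) = 1$ and $\embcodim(\O_{X,x}) = d-1$, to pin down the codimensions and count of the irreducible components of $J_m(X;x)$ for $m \gg 0$. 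The goal is to show that every divisorial valuation realizable as the ``generic'' valuation of a component of $J_m(X;x)$ is already a Nash valuation at $x$; injectivity from \cref{t:intro:welldefined-injective} then upgrades to a bijection.

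\textbf{Step 2 (reduction via hyperplane sections).} For $x \in X$ an isolated higher compound Du Val singularity, choose a generic hyperplane $H$ through $x$, so that $Y = H \cap X$ has a higher Du Val singularity at $x$. The idea is to compare $J_m(X;x)$ with $J_m(Y;x)$ through the closed embedding $Y \inj X$ and exploit the genericity of $H$: given an irreducible component $W$ of $J_m(X;x)$, a generic jet in $W$ can be adapted to factor through a suitable general hyperplane section, producing a component $W'$ of $J_m(Y;x)$ which, by Step 1, is of arc type. The corresponding Nash family of arcs on $Y$ then extends, by moving $H$ in the family of general hyperplanes through $x$, to a Nash family of arcs on $X$ whose image under the map of \cref{t:intro:welldefined-injective} recovers $W$.

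\textbf{Main obstacle.} The hard part is clearly Step 1: controlling the components of $J_m(X;x)$ for a higher Du Val singularity without a classification. In dimension two, Mourtada's theorem exploits ADE normal forms and explicit equations, whereas in arbitrary dimension one must argue abstractly from the extremal values of mld and embedding codimension. The delicate point is ruling out ``spurious'' components of $J_m(X;x)$ that fail to lift to Nash arcs, since \cref{t:intro:welldefined-injective} only gives injectivity. I expect the technical heart to be a sharp jet-codimension estimate combining the rationality and locally complete intersection properties (for lower bounds) with the extremal discrepancy and embedding codimension (for upper bounds), so that the only divisorial valuations achieving equality are precisely the Nash valuations at $x$.
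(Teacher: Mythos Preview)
Your Step 1 is essentially the paper's approach, though you are vague on the two ingredients that make the codimension squeeze work. The paper embeds $X$ in $A = \A^{2d-1}$, takes the cylinder $V \subset A_\infty$ over a given component $D \subset X_m^x$, and bounds $\codim(V,A_\infty)$ from above by counting Hasse--Schmidt equations and from below via $\mld_x(A,(d-1)X) = 1$ (inversion of adjunction). Equality forces $V = C_A(F)$, and the proof of inversion of adjunction then produces $W = C_X(E) \subset X_\infty^x$ with $a_E(X) = 1$ and $\psi_m(W) \subset D$. The step you do not mention, and which is essential, is that $a_E(X)=1$ makes $\ord_E$ a terminal valuation (BCHM), hence a Nash valuation by \cite{dFD16}; without this input there is no reason $W$ is an irreducible component of $X_\infty^x$ rather than a proper subset of one.

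Your Step 2, however, has a genuine gap. The proposed mechanism ``extend the Nash family on $Y$ to a Nash family on $X$ by moving $H$'' is not well-defined: arcs on $Y$ are already arcs on $X$, but there is no reason a Nash component of $Y_\infty^x$ sweeps out, or is contained in, a Nash component of $X_\infty^x$ whose truncation lands back in the original $D$. Varying the hyperplane does not produce arcs outside $Y$ in any controlled way. The paper does something quite different. It runs the same ambient codimension squeeze directly on $X$ (which still has maximal embedding codimension) to produce $W = C_X(E) \subset X_\infty^x$ with $\psi_m(W) \subset D$ and $a_E(X) = \mld_x(X) = d-e > 1$. Now $E$ is \emph{not} crepant, so \cite{dFD16} does not apply and $W$ need not be a Nash component. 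Letting $Z \supset W$ be the Nash component, one must show $\psi_m(Z) \subset D$. This is where $Y$ enters, but in a contrapositive role: the paper compares $\dim \psi_m^Y(W')$ and $\dim \psi_m^X(W)$ via Mather log discrepancies and Teissier's Idealistic Bertini theorem (to control $\ord_{E'}(\Jac_Y)$ against $\ord_E(\Jac_X)$), deduces that the linear equations cutting $Y_m$ out of $X_m$ form a regular sequence at the generic point of $\psi_m^Y(W')$, and concludes that if $\psi_m(Z)$ escaped $D$ then $\psi_m^Y(W')$ would lie in two distinct components of $Y_m^x$, contradicting the injectivity of $\Psi_m^{Y,x}$ already established for the hDV variety $Y$. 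So the hyperplane section is used not to manufacture arcs on $X$, but to import the bijectivity on $Y$ as a constraint on the jet-level geometry of $X$.
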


As a special case, we see that all families of jets of sufficiently high order stemming from 
an isolated compound Du Val singularities are of arc type. 
\cref{t:intro:cDV} addresses our motivating question on families of jets. 
Combined with \cref{t:intro:welldefined-injective}, the theorem
relates to and partially recover a result of Mourtada on
families of jets on Du Val singularities \cite{Mou14} (see \cref{c:graph-crepant-sing}).
Mourtada asked whether for any locally complete intersection variety with rational
singularities the number of families of jets of sufficiently high order stemming from 
the singular locus is the same as the number of Nash families of arcs
\cite[Question~4.5]{Mou14}. Our result provides evidence in this direction. 

For higher Du Val singularities, we have a
more precise result (see \cref{t:surjective}) which we use to solve 
the Nash problem for this class of singularities.
In our solution, Nash valuations are characterized in terms of 
certain partial resolutions of the variety (the terminal models)
that originate from the minimal model program.
Valuations defined by the exceptional divisors on these models are called \emph{terminal valuations}.

\begin{introtheorem}[\cref{c:Nash-crepant-sing}]
\label{t:intro:Nash-crepant-sing}
For a divisorial valuation $\ord_E$ on a variety $X$
with higher Du Val singularities, the following are equivalent:
\begin{enumerate}
\item
$\ord_E$ is a Nash valuation.
\item
$\ord_E$ is a terminal valuation.
\item
$E$ is a crepant exceptional divisor over $X$.
\end{enumerate}
\end{introtheorem}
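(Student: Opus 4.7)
The plan is to split the equivalences into two halves: $(2) \Leftrightarrow (3)$, handled by birational geometry and the MMP, and $(1) \Leftrightarrow (3)$, which is the substantial content and relies on the refined jet--arc correspondence of \cref{t:surjective}.

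For $(2) \Leftrightarrow (3)$, a higher Du Val singularity is canonical with $\mld_x(X) = 1$, so any terminal model $f : Y \to X$ must satisfy $K_Y = f^* K_X$: writing $K_Y - f^* K_X = \sum a_i E_i$ with $a_i \ge 0$ (by canonicity), this effective $f$-exceptional divisor is $f$-nef on a terminal model, and the negativity lemma forces it to vanish. Hence every exceptional divisor on a terminal model is crepant, giving $(2) \Rightarrow (3)$. For the converse, the BCHM extraction principle gives $(3) \Rightarrow (2)$: any crepant divisor $E$ over $X$ can be realized as an exceptional divisor on some terminal model by running a suitable $(K_X + \epsilon E)$-MMP with scaling.

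The main direction is $(1) \Leftrightarrow (3)$, where \cref{t:surjective} enters decisively. By \cref{t:welldefined-injective} and \cref{t:cDV}, on an isolated higher compound Du Val singularity the set of Nash families of arcs through $x$ is in natural bijection with the set of families of jets of arc type through $x$ of sufficiently high order. The content of \cref{t:surjective} in the higher Du Val setting is that this latter set is in turn parametrized by the crepant divisors over $X$, with matching valuations. The principal obstacle is precisely the surjectivity in $(3) \Rightarrow (1)$: a priori, a crepant divisor only yields a divisorial subset of the arc space through $x$, which could in principle be contained in a larger irreducible component with smaller valuation. The jet-theoretic argument in \cref{t:surjective} overcomes this by first matching jet families -- which are governed by the finite-dimensional geometry of higher Du Val singularities and can be enumerated via crepant divisors -- and then promoting the matching to arcs via the correspondence of \cref{t:welldefined-injective}.
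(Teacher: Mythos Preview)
Your proposal lands on the right ingredients and is essentially correct, but your account of how \cref{t:surjective} delivers $(1) \Leftrightarrow (3)$ has the two directions reversed, and this obscures an essential input.

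The paper argues via the cycle $(3) \Rightarrow (2) \Rightarrow (1) \Rightarrow (3)$: BCHM extraction, then the main theorem of \cite{dFD16} (terminal valuations are Nash valuations), then part~(2) of \cref{t:surjective}. Your separate proof of $(2) \Rightarrow (3)$ via the negativity lemma is correct but redundant once the cycle closes.

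The more substantive point is your description of the ``principal obstacle''. You identify $(3) \Rightarrow (1)$ as the hard jet-theoretic step, but it is the other way around. The implication $(3) \Rightarrow (1)$ --- that $C_X(E)$ for a crepant $E$ is a genuine Nash component rather than a proper divisorial subset --- is \emph{not} what the jet computation in \cref{t:surjective} provides; inside that proof it is obtained precisely from BCHM together with \cite{dFD16}, i.e., via $(3) \Rightarrow (2) \Rightarrow (1)$. The jet-theoretic codimension estimate runs in the opposite direction: it shows that every jet family (hence, via the bijectivity of $\Psi_m^x$, every Nash family) arises from a divisor computing $\mld_x(X) = 1$, which gives $(1) \Rightarrow (3)$. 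So \cite{dFD16} is indispensable and should be named explicitly; your references to \cref{t:cDV} and the detour through ``families of jets of arc type'' are unnecessary, since part~(2) of \cref{t:surjective} already characterizes the Nash components of $X_\infty^x$ directly as the sets $C_X(E)$ with $a_E(X)=1$.
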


This result is in line with the point of view proposed in \cite{dFD16}.
It can be viewed as a higher dimensional generalization of one of the properties
characterizing Du Val singularities
among normal surface singularities.

In dimension two, there are four proofs of the Nash 
problem for Du Val singularities \cite{PP13,FdBPP12,Reg12,dFD16}.
While the proof given here follows a different path,
relying on inversion of adjunction and the minimal model program, 
it also uses on the main theorem of \cite{dFD16}
and therefore it should not be considered as providing a new proof in dimension two
for Du Val singularities.
In higher dimensions, however, \cref{t:intro:Nash-crepant-sing} does not
follow directly from \cite{dFD16}.

Throughout the paper, we work over an algebraically closed field $k$ of characteristic zero. 

\subsection*{Acknowledgments} 

We thank Roi Docampo, Hsueh-Yung Lin, and the referee for valuable comments and suggestions.

\section{Arc spaces and jet schemes}

For a scheme $X$ over $k$, we 
denote by $X_\infty$ the \emph{arc space} of $X$ over $k$ 
and by $X_m$ the \emph{$m$-th jet scheme} of $X$. 
We refer to \cite{Voj07,EM09,dF18} for general references on the subject.
An arc $\a \in X_\infty$ is a morphism $\a \colon \Spec k_\a[[t]] \to X$
and a jet $\b \in X_m$ a morphism $\b \colon \Spec k_\b[t]/(t^{m+1}) \to X$.
We denote by $\a(0)$ and $\b(0)$ the images of the respective closed points, 
and by $\a(\e)$ the image of the generic point of $\Spec k_\a[[t]]$.
There are truncation maps $\p \colon X_\infty \to X$ and 
$\p_m \colon X_m \to X$ sending an arc $\a$
(respectively, an $m$-jet $\b$) to its special point $\a(0)$ (respectively, $\b(0)$), 
as well as $\ff_m \colon X_\infty \to X_m$ and $\p_{n,m} \colon X_n \to X_m$ for $n > m$. 
We denote these maps by $\p^X$, $\p_m^X$, $\ff_m^X$, and $\p_{n,m}^X$
whenever there is a need to specify the underlying scheme $X$. 

Let now $X$ be a variety. 
Constructibility in $X_\infty$ is defined as in \cite{EGAiii_1} 
(see also \cite[\href{https://stacks.math.columbia.edu/tag/005G}{Tag~005G}]{SP}):
a subset $C \subset X_\infty$ is \emph{constructible} if and only if it is a finite union of
finite intersections of retrocompact open sets and their complements;
equivalently, $C$ is constructible if and only $C = \ff_m^{-1}(S)$
for some $m$ and some constructible set $S \subset X_m$. 
An irreducible subset $C \subset X_\infty$ is \emph{non-degenerate}
if $C \not\subset (\Sing X)_\infty$.

When $X$ is smooth, constructible sets are also called \emph{cylinders}. 
Their \emph{codimension} is defined by 
$\codim(C,X_\infty) := \codim(S,X_m)$ where, as before, $C = \ff_m^{-1}(S)$. 
Using the simple structure of the truncation maps $\p_{n,m}$, 
it is easy to check that this is well defined. 
The codimension of $C$ defined above agrees with 
topological codimension of the closure of $C$ in $X_\infty$;
if $C$ is irreducible and $\a \in C$ is the generic point, then
this is the same as $\dim(\O_{X_\infty,\a})$. 

When $X$ is singular, one defines the \emph{jet codimension}
of a constructible set $C \subset X_m$ by setting
$\jetcodim(C,X_\infty) := (m+1)\dim(X) - \dim(S)$ where, again, $C = \ff_m^{-1}(S)$
(cf.\ \cite{dFEI08}). 
If $C$ is irreducible and $\a \in C$ is the generic point, then this agrees
with $\embdim(\O_{X_\infty,\a})$.

Every arc $\a \in X_\infty$ defines a semi-valuation 
$\ord_\a \colon \O_{X,\a(0)} \to \Z \cup \{\infty\}$,
given by $\ord_\a(h) = \ord_t(\a^\sharp(h))$,
which extends to a valuation of the function field of $X$
if and only if the generic point $\a(\e)$ of the arc is the generic point of $X$. 
In a similar fashion, every jet $\b \in X_m$
defines a function $\ord_\b \colon \O_{X,\b(0)} \to \{0,1,\dots,m\} \cup \{\infty\}$
given by $\ord_\b(h) = \ord_t(\b^\sharp(h))$, where we set $\ord_t(0) = \infty$.

A \emph{prime divisor} over $X$ is, by definition,
a prime divisor $E$ on a normal birational model $Y \to X$. 
Any such divisor $E$ defines a valuation $\ord_E$ on $X$.
A valuation on $X$ of the form $v = q\ord_E$
where $E$ is a prime divisor over $X$ and $q$ is a positive integer
is called a \emph{divisorial valuation}. 
The image in $X$ of the generic point of $E$ is called the 
\emph{center} of the valuation (or of $E$), and is denoted by $c_X(v)$ or $c_X(E)$.
For a divisorial valuation $v = q\ord_E$, the closure $C_X(v) \subset X_\infty$
of the set of arcs $\a$ such that $\ord_\a = v$
is called the \emph{maximal divisorial set} associated to $v$.
This is an irreducible closed constructible subset of $X_\infty$.
When $v = \ord_E$, we also denote this set by $C_X(E)$. 

Let now $X$ be a variety.
As shown in \cite{Nas95} (see also, e.g., \cite{IK03,dF18}), the set
$\p^{-1}(\Sing X)$ decomposes as a finite union of irreducible components, and each component defines a
divisorial valuation on $X$. These are called \emph{Nash valuations}
and the problem is to characterize them. Nash conjectured that, in dimension two, Nash valuations 
are precisely those defined by the exceptional divisors on the minimal resolution, and proposed the notion of
\emph{essential divisor} as a possible higher dimensional generalization which he speculated may 
characterize Nash valuations in all dimensions. 
These questions, which are generally referred to as the \emph{Nash problem}, have generated a lot of activity. 

Culminating the work of many people, the
complete solution of the Nash problem in dimension two was eventually found by Fernandez de Bobadilla
and Pe Pereira in \cite{FdBPP12}, and before that, in the toric case by Ishii and Koll\'ar \cite{IK03}. 
A new, algebraic proof in the surface case was later found in \cite{dFD16}, 
where it was proved that, in any dimension, all valuations defined by exceptional divisors on 
terminal models over $X$ are Nash valuations; we call the valuations arising in this way
\emph{terminal valuations}. Nash's original guess of
what the picture should be in dimension $\ge 3$, however, turned out
to be incorrect \cite{IK03,dF13,JK13}.
In view of this, one can reinterpret the Nash problem as asking
for a characterization of Nash valuations 
in terms of resolution of singularities of a variety $X$ and, more generally, its birational geometry.

\section{Minimal log discrepancies}

Let $X$ be a normal variety, and assume that its canonical class $K_X$ is $\Q$-Cartier.
For every prime divisor $E$ over $X$, if $f \colon Y \to X$ is the normal birational
model where $E$ lies, then we define the \emph{log discrepancy}
of $E$ over $X$ by $a_E(X) := \ord_E(K_{Y/X}) + 1$, and
the \emph{Mather log discrepancy}
of $E$ over $X$ by $\^a_E(X) := \ord_E(\Jac_f) + 1$. 
These invariants of $E$ only depends on the valuation $\ord_E$, 
and they agree if $X$ is smooth at the center of $E$.

An \emph{effective $\R$-subscheme} $Z$ of $X$ is an expression 
$Z = \sum_{j=1}^s c_jZ_j$ where $Z_j \subset X$ is a proper closed subscheme 
and $c_j > 0$ for every $j$. Its \emph{support} is 
the union of the support of the $Z_j$.
For any effective $\R$-subscheme $Z$, we define the \emph{log discrepancy}
of $E$ over the pair $(X,Z)$ to be $a_E(X,Z) := a_E(X) - \sum c_j \ord_E(\I_{Z_j})$
where $\I_{Z_j} \subset \O_X$ is the ideal sheaf of $Z_j$.
The \emph{minimal log discrepancy} of $(X,Z)$ at a
point $x$ is defined by
\[
\mld_x(X,Z) := \inf_{c_X(E) = x} a_E(X,Z)
\]
where the infimum is taken over all prime divisors $E$ with center $x$.
When there is no $Z$, we just write $\mld_x(X)$. 
We set $\mld_x(X,Z) = 0$ if $x$ is the generic point of $X$.
If $\dim X \ge 2$, then $\mld_x(X,Z) \in \{- \infty\} \cup [0, \infty)$.
For sake of uniformity, it is convenient 
to declare that $\mld_x(X,Z) = - \infty$ whenever it is negative when $\dim X = 1$ as well.

The following is a slightly more general reformulation of the main theorem of \cite{EM04}. 
The proof is essentially contained in \cite{EM09}. We review the key part of the argument
for completeness. A similar argument will also be used later in the paper, so it is useful to review
it here anyway.

\begin{theorem}[Inversion of adjunction \cite{EM04}]
\label{t:inv-adj}
Let $X$ be a smooth variety, $Y = H_1 \cap \dots \cap H_e \subset X$ a normal positive dimensional
subvariety defined by the complete intersection of $e$ hypersurfaces $H_i \subset X$, and
$Z = \sum c_jZ_j$ an effective $\R$-subscheme of $X$ not containing $Y$ in its support.
Then for every $x \in Y$ we have
\[
\mld_x(Y,Z|_Y) = \mld_x(X,Z + eY) = \mld_x\Big(X,Z + \sum_{i=1}^e H_i\Big),
\] 
where $Z|_Y := \sum c_j(Z_j \cap Y)$. 
\end{theorem}

\begin{proof}
We may assume that $x$ is not the generic point of $Y$, the statement being elementary in that case. 
The proofs of the inequalities $\mld_x(Y,Z|_Y) \ge \mld_x(X,Z + eY) \ge \mld_x(X,Z + \sum H_i)$
are fairly standard and are omitted.  We review the proof of the inequality
\[
\mld_x(Y,Z|_Y) \le \mld_x(X,Z + \sum H_i),
\] 
which is the hard part of the theorem.
To this end, it suffices to show that
for every divisorial valuation $v = \ord_F$ on $X$ with center $c_X(v) = x$, 
there is a divisorial valuation $w = q\ord_E$ over $Y$ with center $c_Y(w) = x$
such that 
\[
q\,a_E(Y,Z|_Y) \le a_F(X,Z + \sum H_i).
\]

We denote by $Y_\infty^x$ the reduced inverse image
of $x$ under the projection $\p^Y \colon Y_\infty \to Y$. 
By definition, $Y_\infty^x$ is the set of arcs in $Y$ stemming from $x$. 

Let $C_X(v) \subset X_\infty$ 
be the maximal divisorial set associated to $v$.
Note that $\p^X(C_X(v))$ is an irreducible constructible set
with generic point $x$. Consider the intersection
\[
C_X(v) \cap Y_\infty.
\]
As $v$ is centered at $x$ and $C_X(v)$ is closed under the action of 
the morphism $\Phi_\infty \colon \A^1 \times X_\infty \to X_\infty$ 
given by $(a,\a(t)) \mapsto \a(at)$ (cf.\ \cite[Section~3]{EM04}), 
we see that $C_X(v)$ contains the constant arc at $x$, 
hence $C_X(v) \cap Y_\infty^x \ne \emptyset$.
It follows that $x$ is the generic point of $\p^X(C_X(v) \cap Y_\infty)$.
Therefore we can pick an irreducible component $W$ of $C_X(v) \cap Y_\infty$
such that $\p^Y(W)$ has $x$ as its generic point.
Note that \cite[Lemma~8.3]{EM09} applies to $C_X(v) \cap Y_\infty^x$
since both $C_X(v)$ and $Y_\infty^x$ are closed under the action of 
the morphism $\Phi_\infty$, hence 
$C_X(v) \cap Y_\infty^x \not\subset (\Sing Y)_\infty$.
Therefore we can assume that $W$ is not contained in $(\Sing Y)_\infty$. 
By construction $W$ is the closure of an irreducible constructible set in $Y_\infty$, 
hence, by \cite{dFEI08}, its generic point $\g \in W$ defines a divisorial valuation
$w = q\ord_E$ on $Y$, and \cite[Lemma~8.4]{EM09} (its proof, to be precise) gives
\[
\jetcodim(W,Y_\infty) \le \codim(C_X(v),X_\infty) + q\ord_E(\Jac_Y) - \sum \ord_F(\I_{H_i}).
\]
Since $W \subset C_Y(w)$, \cite[Theorem~3.8]{dFEI08} implies that
$\jetcodim(W,Y_\infty) \ge q\.\^a_E(Y)$
where $\^a_E(Y)$ is the Mather log discrepancy.
As $Y$ is normal and locally complete intersection, we
have $\^a_E(Y) = a_E(Y) + \ord_E(\Jac_Y)$ (see, e.g., \cite[Corollary~3.5]{dFD14}), 
hence 
\[
\jetcodim(W,Y_\infty) \ge q (a_E(Y) + \ord_E(\Jac_Y)).
\]
On the other hand, as $X$ is smooth, we have
\[
\codim(C_X(v),X_\infty) = a_F(X).
\]
Finally, by the semicontinuity of order of contact function induced by $\I_{Z_j}$ on $X_\infty$, we have
\[
q\ord_E(\I_{Z_j}\.\O_Y) \ge \ord_F(\I_{Z_j}).
\]
By combining the above formulas, we conclude that $q\,a_E(Y,Z|_Y) \le a_F(X,Z + \sum H_i)$.
\end{proof}

\begin{remark}
\label{r:inv-adj=}
Going through the above proof (with $Z = 0$), suppose that 
$a_F(X,\sum H_i) = \mld_x(X,\sum H_i) \ge 0$.
Then we necessarily have $q\,a_E(Y) = a_F(X,\sum H_i)$, 
since $q\,a_E(Y) \ge a_E(Y) \ge \mld_x(Y)$,
hence $q\,a_E(Y) = a_E(Y) = \mld_x(Y)$.
In particular, if $\mld_x(Y) > 0$ then $q = 1$.
Furthermore, the inequalities in the formulas displayed in the proof
must all be equalities, hence $W = C_Y(w)$.
\end{remark}

\begin{corollary}
\label{c:inv-adj}
Let $X$ be a normal locally complete intersection variety, 
$Y = H_1 \cap \dots \cap H_e \subset X$ a normal positive dimensional
subvariety defined by the complete intersection of $e$ hypersurfaces $H_i \subset X$, and
$Z = \sum c_jZ_j$ an effective $\R$-subscheme of $X$ not containing $Y$ in its support.
Then for every $x \in Y$ we have
\[
\mld_x(Y,Z|_Y) = \mld_x(X,Z + eY) = \mld_x\Big(X,Z + \sum_{i=1}^e H_i\Big).
\] 
\end{corollary}

\begin{proof}
Again, it suffices to prove that $\mld_x(Y,Z_Y) = \mld_x(X,Z + \sum H_i)$. 
Working locally near $x$, we can fix a closed embedding $X \subset A$ where $A$ is a smooth
variety, and hypersurfaces $D_1, \dots, D_r \subset A$ where $r = \codim(Y,A)$, 
such that $H_i = D_i \cap X$ for $i=1,\dots,e$ and $X = D_{e+1} \cap \dots \cap D_r$.
Note that $Y = D_1 \cap \dots \cap D_r$. 
By \cref{t:inv-adj} (applied twice, to $Y \subset A$ and $X \subset A$), we have
\[
\mld_x(Y,Z|_Y) 
= \mld_x\Big(A,Z + \sum_{i=1}^r D_i\Big) 
= \mld_x\Big(X,Z + \sum_{i=1}^e H_i\Big).
\]
This completes the proof.
\end{proof}

\section{Families of jets of arc type}

Let $X$ be a positive dimensional variety.
For any subset $\S \subset X$, we consider the sets
\[
X_\infty^\S := \p^{-1}(\S)_\red = \{\a \in X_\infty \mid \a(0) \in \S\}
\]
and
\[
X_m^\S := \p_m^{-1}(\S)_\red = \{\b \in X_m \mid \b(0) \in \S\}.
\]
By definition, $X_\infty^\S$ is the set of arcs on $X$ through $\S$,
and $X_m^\S$ is the set of $m$-jets through $\S$.  

Assume that $\S \subset X$ is a closed subset.
Since $X_m$ is a scheme of finite type, each $X_m^\S$ decomposes into 
a finite union of irreducible components, and a generalization of 
Nash's theorem \cite{Nas95} tells us that the same happens for $X_\infty^\S$. 

In the following, we denote by $\G \subset X_\infty^\S \setminus (\Sing X)_\infty$ 
the set of generic points; that is, $\a \in \G$
if and only if $\a$ is the generic point of a non-degenerate irreducible component of $X_\infty^\S$.
Let 
\[
\m := \max_{\a\in \G} \ord_\a(\Jac_X).
\]
Note that $\m < \infty$ since $\G$ is finite and each $\a\in\G$ is non-degenerate.

We fix an integer $\n \ge \m$ such that the images $\ff_\n(\a) \in X_\n$, 
for $\a \in \G$, are all distinct and
there are no specializations within the set $\ff_\n(\G) \subset X_\n$
(meaning that $\ff_\n(\G)$, with the induced topology, is discrete). 
The existence of such integer follows from the definition of $X_\infty$
as inverse image of the jet schemes under the truncation maps.

\begin{theorem}
\label{t:welldefined-injective}
Let $X$ be a variety and $\S \subset X$ a closed subset. 
Then for every $m \ge \mu + \nu$ there is a naturally defined injective map
\[
\Psi^\S_m \colon \{\text{non-degenerate irreducible components of $X_\infty^\S$}\} \to
\{\text{irreducible components of $X_m^\S$}\}
\]
sending a non-degenerate irreducible component $C$ of $X_\infty^\S$
to the unique irreducible component $D$ of $X_m^\S$
containing the image of $C$ in $X_m$.
\end{theorem}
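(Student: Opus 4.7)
Let $C_1, \dots, C_r$ be the non-degenerate irreducible components of $X_\infty^\S$ with generic points $\a_1, \dots, \a_r \in \G$, and set $e_i := \ord_{\a_i}(\Jac_X) \le \m$ and $\b_i := \ff_m(\a_i) \in X_m^\S$. Since $\ff_m$ is continuous and $C_i = \overline{\{\a_i\}}$, we have $\overline{\ff_m(C_i)} = \overline{\{\b_i\}}$. The plan is to show that each $\overline{\{\b_i\}}$ is already an irreducible component of $X_m^\S$ and that these $r$ components are pairwise distinct; this yields both well-definedness (the unique component of $X_m^\S$ containing $\ff_m(C_i)$ is then $\overline{\{\b_i\}}$ itself) and injectivity in one stroke.

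The elementary half is the non-inclusion statement. If $\overline{\{\b_i\}} \subset \overline{\{\b_j\}}$, then $\b_i$ is a specialization of $\b_j$, so applying the truncation $\p_{m,\n}$ shows that $\ff_\n(\a_i)$ is a specialization of $\ff_\n(\a_j)$; by the defining property of $\n$, this forces $i = j$. In particular, no two of the $\overline{\{\b_i\}}$ coincide and none contains another.

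The substantive step is the maximality of $\overline{\{\b_i\}}$ among irreducible closed subsets of $X_m^\S$. Suppose for contradiction that $\overline{\{\b_i\}} \subsetneq D$ for some irreducible closed $D \subset X_m^\S$ with generic point $\eta$, so $\b_i$ is a specialization of $\eta$. Since $\{\b \in X_m \mid \ord_\b(\Jac_X) \ge c\}$ is closed for every $c$, we obtain $\ord_\eta(\Jac_X) \le \ord_{\b_i}(\Jac_X) = e_i \le \m$. Setting $e := \ord_\eta(\Jac_X)$, the hypothesis $m \ge \m + \n \ge 2\m \ge 2e$ places us in the range where the truncation maps $\p_{n+1,n} \colon X_{n+1}^{(e)} \to X_n^{(e)}$ between the Jacobian strata $\{\ord(\Jac_X) = e\}$ are surjective piecewise trivial $\A^{\dim X}$-fibrations (see \cite{EM09}). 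Iterating the surjectivity step and taking the inverse limit, one lifts $\eta$ to an arc $\~\eta \in X_\infty$ with $\ff_m(\~\eta) = \eta$.

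Because $\~\eta(0) = \eta(0) \in \S$ we have $\~\eta \in X_\infty^\S$, and because $\ord_{\~\eta}(\Jac_X) = e < \infty$ the arc $\~\eta$ is non-degenerate. Hence $\~\eta$ lies in some non-degenerate component $C_j$, so $\eta = \ff_m(\~\eta) \in \overline{\{\b_j\}}$, giving $D \subset \overline{\{\b_j\}}$ and consequently $\overline{\{\b_i\}} \subsetneq \overline{\{\b_j\}}$, which contradicts the non-inclusion already established. The main obstacle in this plan is the lifting step: one needs the Jacobian-stratum structure of the truncations to produce a set-theoretic lift at each level, together with a verification that these lifts can be chosen compatibly so as to descend to a genuine scheme-theoretic point of $X_\infty = \varprojlim X_n$ via the induced compatible system of residue fields.
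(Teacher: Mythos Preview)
Your approach aims for something stronger than the theorem: that each $\overline{\{\b_i\}}$ is itself an irreducible component of $X_m^\S$. The non-inclusion half of your argument is fine and would indeed give injectivity. The gap is in the maximality step, specifically in lifting $\eta$ to an arc. You assert that for $n \ge 2e$ the truncation $\p_{n+1,n} \colon X_{n+1}^{(e)} \to X_n^{(e)}$ between Jacobian strata is surjective, citing \cite{EM09}; but this is not what those results say, and the claim is false. For instance, on $X = \{xy = z^3\} \subset \A^3$ the $2$-jet $(x,y,z) = (t,t^2,0)$ has $\ord(\Jac_X) = 1$, so $e = 1$ and $m = 2 = 2e$, yet it does not lift to $X_3$: any lift satisfies $xy - z^3 \equiv t^3 \pmod{t^4}$. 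The correct content of \cite[Proposition~4.1(i)]{EM09} is that an $n$-jet with $\ord(\Jac_X) = e$ lifts to an arc \emph{provided it already lifts to} $X_{n+e}$; for your $\eta \in X_m$ no higher lift is given, so you cannot conclude that $\eta$ comes from an arc. Likewise, the piecewise-trivial fibration structure of \cite[Proposition~4.4]{EM09} is stated over the liftable locus, not over the full stratum. The obstacle you flag at the end (compatibility of lifts in the inverse limit) is thus secondary; the first lift is already unavailable.

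The paper's argument sidesteps this by truncating before lifting. Given a component $D \ni \b_i$ with generic point $\b$, one passes to $\b_n := \p_{m,n}(\b)$ at a level $n$ chosen so that $m - n \ge \m \ge \ord_{\b_n}(\Jac_X)$; now $\b_n$ \emph{does} lift to $X_{n+e}$ (it lifts all the way to $\b \in X_m$), so the lifting lemma applies and produces an arc, which one then shows lies in $C_i$. This forces $\b_n = \ff_n(\a_i)$, and irreducibility of the fiber $\p_{m,n}^{-1}(\ff_n(\a_i))$ (this is where \cite[Proposition~4.4(ii)]{EM09} enters) gives uniqueness of $D$. Note that this establishes the theorem as stated but does not prove your stronger claim that $D = \overline{\{\b_i\}}$; if you want that, a separate argument identifying $\b_i$ with the generic point of that fiber is needed.
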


\begin{definition}
We say that an irreducible component of $X_m^\S$
is \emph{of arc type} if it is in the image of $\Psi_m^\S$.
\end{definition}

\begin{remark}
There are two special cases about \cref{t:welldefined-injective}. The first is when we take
$\S = \Sing X$. In this case every irreducible component of $X_\infty^{\Sing X}$
is non-degenerate and the domain of $\Psi_m^{\Sing X}$
is the set of Nash families of arcs.
The second special case is when $\S = X$. 
In this case, the domain of $\Psi_m^X$ is a singleton
and the image of $\Psi_m^X$ is the irreducible component of $X_m$
dominating $X$, namely, the closure of $(X_\reg)_m$. 
\end{remark}

We will break the proof of \cref{t:welldefined-injective} into two steps:
proving that $\Psi_m^\S$ is well-defined, and showing that it is injective.
We may assume that $\S$ is nonempty, the statement being trivial otherwise.

We start with the basic observation that 
\[
\ff_m(X_\infty^\S) \subset X_m^\S.
\] 
This implies that for every non-degenerate irreducible component $C$ of $X_\infty^\S$
there exists an irreducible component $D$ of $X_m^\S$ such that
$\ff_m(C) \subset D$. 
Our goal is to prove that if $m \ge \m + \n$ then such component $D$ is unique
(proving well-definedness), and 
that a different component $D$ of $X_m^\S$ occurs for each 
non-degenerate component $C$ of $X_\infty^\S$
(proving injectivity). 

These properties follow by standard facts about the structure of 
the truncation maps, specifically from
Greenberg's theorem on liftable jets \cite{Gre66}
and from a result of Looijenga on the fibers of the 
truncation maps between jet schemes \cite{Loo02}. 
For convenience, we will cite these results from \cite{EM09}. 

We start with the first assertion.

\begin{lemma}
\label{p:welldefined}
If $m \ge \m + \n$, then for every non-degenerate irreducible component $C$ of $X_\infty^\S$
there exists a unique irreducible component $D$ of $X_m^\S$ 
such that $\ff_m(C) \subset D$. 
\end{lemma}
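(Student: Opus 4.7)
The plan is to reduce uniqueness to a specialization argument in $X_\n$, using Looijenga's theorem to lift any competing generic point of $X_m^\S$ back to an arc in $X_\infty^\S$. Existence is immediate: since $\ff_m(C)$ is the continuous image of an irreducible set, it is irreducible, hence contained in some irreducible component of $X_m^\S$. Let $\a$ be the generic point of $C$ and set $e := \ord_\a(\Jac_X) \le \m$, so that $m \ge \m + \n \ge 2e$. Suppose, toward a contradiction, that $\ff_m(\a) \in D_1 \cap D_2$ for two distinct irreducible components $D_1, D_2$ of $X_m^\S$, with generic points $\d_1, \d_2$. Each $\d_i$ must be a proper generization of $\ff_m(\a)$: otherwise $D_i = \ov{\{\ff_m(\a)\}} \subset D_{3-i}$, forcing $D_1 = D_2$.

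The key step is to lift each $\d_i$ back to a non-degenerate arc. The open subset $U := \{\b \in X_m \mid \ord_\b(\Jac_X) \le e\}$ of $X_m$ contains $\ff_m(\a)$ (as $m \ge e$), and therefore contains every generization of it; in particular $\d_i \in U$. By Looijenga's theorem (see, e.g., \cite[Theorem~4.1]{EM09}), whenever $m' \ge 2e$ the truncation $\p_{m'+1, m'}$ restricts to a piecewise trivial $\A^{\dim X}$-fibration over the corresponding locus $\{\ord(\Jac_X) \le e\} \subset X_{m'}$. Iterating from level $m$ and passing to the inverse limit produces an arc $\~\d_i \in X_\infty$ with $\ff_m(\~\d_i) = \d_i$ and $\ord_{\~\d_i}(\Jac_X) \le e$, so $\~\d_i$ is non-degenerate. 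Since $\d_i(0) \in \S$, we have $\~\d_i \in X_\infty^\S$, and therefore $\~\d_i$ lies in some non-degenerate irreducible component of $X_\infty^\S$ whose generic point $\a' \in \G$ satisfies $\a' \rightsquigarrow \~\d_i$.

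Applying $\ff_m$ to $\a' \rightsquigarrow \~\d_i$ and then projecting by $\p_{m, \n}$, together with the specialization $\d_i \rightsquigarrow \ff_m(\a)$, yields $\ff_\n(\a') \rightsquigarrow \ff_\n(\a)$ in $X_\n$, a specialization between two points of $\ff_\n(\G)$. By the defining property of $\n$---the images $\ff_\n(\G)$ are distinct and admit no nontrivial specializations among them---this forces $\a' = \a$. Then $\a \rightsquigarrow \~\d_i$ implies $\ff_m(\a) \rightsquigarrow \d_i$, which combined with $\d_i \rightsquigarrow \ff_m(\a)$ forces $\d_i = \ff_m(\a)$, a contradiction. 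The only delicate step is the Looijenga lifting of the generic points $\d_i$; the bound $m \ge \m + \n$ is imposed precisely so that $m \ge 2e$ holds and each $\d_i \in U$ extends to a non-degenerate arc in $X_\infty^\S$, after which the specialization bookkeeping becomes routine.
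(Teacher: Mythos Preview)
Your argument is correct and in fact slightly more streamlined than the paper's. Both proofs share the same skeleton: assume $\ff_m(\a)$ lies in two distinct components, lift their generic points back to arcs in $X_\infty^\S$, and use the defining property of $\n$ to force these arcs into $C$. The difference is in how the contradiction is extracted. The paper introduces an auxiliary level $n$ with $\n \le n$ and $2n \ge m \ge \m + n$, projects the competing generic points $\b,\b'$ down to level $n$, lifts those via Greenberg (\cite[Proposition~4.1(i)]{EM09}), deduces $\b_n = \a_n = \b_n'$, and then invokes Looijenga's fiber irreducibility (\cite[Proposition~4.4(ii)]{EM09}) on $\p_{m,n}^{-1}(\a_n)$. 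You instead lift $\d_i$ directly from level $m$, conclude $\a \rightsquigarrow \~\d_i$, and hence $\ff_m(\a) \rightsquigarrow \d_i \rightsquigarrow \ff_m(\a)$, giving $\d_i = \ff_m(\a)$ without ever touching fiber irreducibility or the auxiliary $n$. That is a genuine economy: you use only one of the two structural results the paper invokes.

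Two small points of polish. First, the phrase ``iterating from level $m$ and passing to the inverse limit'' is correct (each fiber is an affine space over the residue field, hence has a rational point, so one builds a compatible tower over a fixed field), but it is cleaner to cite the lifting directly: since $\ord_{\d_i}(\Jac_X) \le e \le \m \le m$, \cite[Proposition~4.1(i)]{EM09} already gives an arc $\~\d_i$ with $\ff_m(\~\d_i) = \d_i$, exactly as the paper uses it at level $n$. Second, the result you cite is due to Greenberg/Denef--Loeser rather than Looijenga, and in \cite{EM09} it is Proposition~4.1, not Theorem~4.1; Looijenga's contribution is the fiber-irreducibility statement (Proposition~4.4), which your route happily avoids.
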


\begin{proof}
We proceed by contradiction and assume that there exists an 
integer $m \ge \m + \n$
and a non-degenerate irreducible component $C$ of $X_\infty^\S$ 
such that $\ff_m(C)$ is contained in the intersection of two distinct 
irreducible components $D$ and $D'$ of $X_m^\S$. 
Whatever the value of $m$, 
we can find another integer $n$ such that
\begin{enumerate}
\item
$n \ge \n$ and
\item
$2n \ge m \ge \m + n$. 
\end{enumerate}
A choice of $n$ can be made by setting $n = \n + k$ where $k$ is defined by $m = \m + \n + k$.

Let $\a \in C$, $\b \in D$ and $\b' \in D'$ denote the respective generic points, 
and let $\a_n = \ff_n(\a)$, $\b_n = \p_{m,n}(\b)$, and $\b_n' = \p_{m,n}(\b')$
be their images in $X_n$. 
Note that both $\b_n$ and $\b_n'$ specialize to $\a_n$. 
Since $\ord_\a(\Jac_X) \le \m \le n$, we have
\[
\ord_{\b_n}(\Jac_X) \le \ord_{\a_n}(\Jac_X) = \ord_\a(\Jac_X) \le \m \le n,
\]
hence \cite[Proposition~4.1(i)]{EM09} implies that $\b_n = \ff_n(\g)$ 
for some arc $\g \in X_\infty$. Similarly, we have $\b_n' = \ff_n(\g')$ 
for some $\g' \in X_\infty$.

Note that $\g, \g' \in X_\infty^\S$. In fact, as $n \ge \n$, 
we see that $\g, \g' \in C$ since, by the definition of $\n$, no other irreducible component
of $X_\infty^\S$ contains a point whose image in $X_m$ specializes to $\a_m$. 
In particular, $\g$ and $\g'$ are specializations of $\a$, 
hence $\b_n$ and $\b_n'$ are both generalizations and specializations
of $\a_n$, meaning that 
\[
\b_n = \a_n = \b_n',
\]
This means that $\b$ and $\b'$ belong to the same fiber of $X_m \to X_n$, 
namely, $\p_{m,n}^{-1}(\a_n)$. 

As $\a_n \in X_n^\S$, the fiber $\p_{m,n}^{-1}(\a_n)$ is contained in $X_m^\S$, 
and since it contains the generic points $\b$ and $\b'$ of the irreducible components
$D$ and $D'$ of $X_m^\S$, it follows that $D$ and $D'$ are irreducible components
of $\p_{m,n}^{-1}(\a_n)$. This contradicts the fact that, 
by \cite[Proposition~4.4(ii)]{EM09}, this fiber is irreducible. 
\end{proof}

We now turn to the second assertion.

\begin{lemma}
\label{p:injective}
If $m \ge \m + \n$, then for every irreducible component $D$ of $X_m^\S$ 
there exists at most one non-degenerate irreducible component $C$ of $X_\infty^\S$
such that $\ff_m(C) \subset D$. 
\end{lemma}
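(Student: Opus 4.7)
The strategy mirrors the proof of \cref{p:welldefined}. Arguing by contradiction, suppose there exist distinct non-degenerate irreducible components $C \ne C'$ of $X_\infty^\S$ with generic points $\a,\a' \in \G$ such that $\ff_m(C) \subset D$ and $\ff_m(C') \subset D$ for a single irreducible component $D$ of $X_m^\S$. The plan is to use Greenberg's liftability theorem to produce an arc whose truncation is a common generization of $\ff_n(\a)$ and $\ff_n(\a')$ at an appropriate auxiliary level $n$, and then to invoke the defining property of $\n$ to force $\a = \a'$.

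Fix $n$ with $n \ge \n$ and $2n \ge m \ge \m + n$ (possible since $m \ge \m + \n$ and $\n \ge \m$), and let $\b$ be the generic point of $D$. Applying $\p_{m,n}$ to the specializations $\ff_m(\a), \ff_m(\a') \in \overline{\{\b\}}$ shows that $\a_n := \ff_n(\a)$ and $\a_n' := \ff_n(\a')$ are both specializations of $\b_n := \p_{m,n}(\b)$. Since specialization can only increase the order of vanishing,
\[
\ord_{\b_n}(\Jac_X) \le \ord_{\a_n}(\Jac_X) = \ord_\a(\Jac_X) \le \m \le n,
\]
so \cite[Proposition~4.1(i)]{EM09} yields an arc $\g \in X_\infty$ with $\ff_n(\g) = \b_n$. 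As $\g(0) = \b(0) \in \S$ and $\ord_\g(\Jac_X) \le \m < \infty$, the arc $\g$ lies in $X_\infty^\S$ and is non-degenerate, hence belongs to some non-degenerate irreducible component $C''$ of $X_\infty^\S$ with generic point $\a'' \in \G$.

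To conclude, note that $\b_n = \ff_n(\g)$ is a specialization of $\ff_n(\a'')$, so $\a_n$ and $\a_n'$ are also specializations of $\ff_n(\a'')$. All three of $\a_n, \a_n', \ff_n(\a'')$ belong to $\ff_n(\G)$; because $\ff_\n = \p_{n,\n} \o \ff_n$, this set inherits from $\ff_\n(\G)$ the property of consisting of pairwise distinct points with no specialization relations among them. Consequently $\a_n = \ff_n(\a'') = \a_n'$, and injectivity of $\ff_n|_\G$ forces $\a = \a'' = \a'$, contradicting $C \ne C'$. The only delicate verification is that the lifted arc $\g$ lands in $X_\infty^\S$ and is non-degenerate, but both facts are immediate from the bound on $\ord_{\b_n}(\Jac_X)$ and the condition $\b(0) \in \S$; the rest is bookkeeping with truncation maps, entirely parallel to the reasoning used for well-definedness.
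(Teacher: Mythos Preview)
Your proof is correct and follows essentially the same approach as the paper's. The paper truncates directly to level $m-\m$ rather than introducing an auxiliary $n$ with $n \ge \n$ and $2n \ge m \ge \m + n$; since $n = m-\m$ satisfies your constraints (and the inequality $2n \ge m$, carried over from \cref{p:welldefined}, is never actually used here), the two arguments are the same up to notation. You are slightly more explicit than the paper in checking that the lifted arc $\g$ is non-degenerate.
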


\begin{proof} 
We need to prove that if $m \ge \m + \n$ and $\a,\a' \in \G$ are such that
their images $\a_m$ and $\a_m'$ in $X_m$ belongs to the same 
irreducible component $D$ of $X_m^\S$, then $\a = \a'$. 

To prove this, let $\b \in D$ be 
the generic point. Then $\b$ specializes to both $\a_m$ and $\a_m'$, 
hence its image $\b_{m-\m} := \p_{m,m-\m}(\b) \in X_{m-\m}$ 
specializes to both images $\a_{m-\m}$ and $\a_{m-\m}'$ of $\a$ and $\a'$
in $X_{m-\m}$.
Note that $m-\m \ge \n \ge \m$. By semicontinuity, 
\[
\ord_{\b_{m-\m}}(\Jac_X) \le \m
\]
Then, by \cite[Proposition~4.1(i)]{EM09}, we see that $\b_{m-\m}$
lifts to an arc; that is, there exists $\g \in X_\infty$ such that 
$\ff_{m-\m}(\g) = \b_{m-\m}$. 
By construction, $\g \in \p^{-1}(\S)$, 
hence there exists $\a'' \in \G$ specializing to $\g$.
It follows that the image of $\a''$ in $X_{m-\m}$ 
specializes to both $\a_{m-\m}$ and $\a_{m-\m}'$.
As $m - \m \ge \n$, we conclude that $\a = \a'' = \a'$. 
\end{proof}

\begin{proof}[Proof of \cref{t:welldefined-injective}]
\cref{p:welldefined} implies that $\Psi_m^\S$ is well-defined for $m \ge \m + \n$, 
and \cref{p:injective} that this map is injective. 
\end{proof}

\begin{remark}
The definition of the function $\Psi_m^\S$ 
constructed in \cref{t:welldefined-injective}
can be extended to all $m \ge 0$ as long as one is willing to regard them
as multivalued function, sending each 
$C$ to all components $D$ containing the image of $C$. 
\end{remark}

\section{The question of surjectivity}

Given \cref{t:welldefined-injective},
it is natural to ask under which conditions on 
singularities one can guarantee that the maps $\Psi_m^\S$ are surjective.
These functions are well-defined for $m \gg 1$, but 
if we are willing to regard them as a multivalued functions, then we can remove
the constrain on $m$. The question of surjectivity still
makes sense for multivalued functions. 

Before we move to discuss the case we will be focusing on, 
it may be instructive to point out that there is already
an interesting answer to the problem (a sufficient condition 
for surjectivity) in the special case where $\S = X$.
This comes from Musta\c t\u a's theorem on 
locally complete intersection canonical singularities.

\begin{theorem}[\cite{Mus01}]
Let $X$ be a locally complete intersection variety with canonical singularities.
Then $\Psi_m^X$ is well defined and surjective for every $m$. 
\end{theorem}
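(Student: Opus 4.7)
The plan is to reformulate the claim and then apply the jet-codimension estimate from the sketch of \cref{t:inv-adj}. Since $X$ is a variety, $X_\infty^X = X_\infty$ has a unique non-degenerate irreducible component, namely $\ov{(X_\reg)_\infty}$, so the domain of $\Psi_m^X$ is a singleton. Hence $\Psi_m^X$ is well defined and surjective if and only if $X_m$ is irreducible (and therefore equal to $\ov{(X_\reg)_m}$). The theorem is thus equivalent to Musta\c t\u a's statement that every jet scheme of a locally complete intersection variety with canonical singularities is irreducible.

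The l.c.i.\ hypothesis provides a dimension bound: locally embedding $X \hookrightarrow A$ in a smooth variety of codimension $c$, the jet scheme $X_m$ is cut out of the smooth variety $A_m$ by $(m+1)c$ equations, so every irreducible component of $X_m$ has dimension at least $(m+1)\dim X$. Since $\ov{(X_\reg)_m}$ realizes this lower bound, it suffices to show that no closed irreducible subset of $\p_m^{-1}(\Sing X)$ has dimension $\ge (m+1)\dim X$.

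The key estimate is as follows. For every non-degenerate irreducible component $C$ of $\p^{-1}(\Sing X) \subset X_\infty$, its generic point defines a divisorial valuation $v = q\ord_E$ with $c_X(E) \subset \Sing X$, and combining \cite[Theorem~3.8]{dFEI08} with the identity $\^a_E(X) = a_E(X) + \ord_E(\Jac_X)$ valid for l.c.i.\ $X$ gives
\[
\jetcodim(C, X_\infty) \;\ge\; q\bigl(a_E(X) + \ord_E(\Jac_X)\bigr) \;\ge\; 2,
\]
using $a_E(X) \ge 1$ from canonicity and $\ord_E(\Jac_X) \ge 1$ because the Jacobian ideal vanishes along $\Sing X$. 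Degenerate components of $\p^{-1}(\Sing X)$ lie in $(\Sing X)_\infty$, whose image in $X_m$ has dimension at most $(m+1)\dim \Sing X < (m+1)\dim X$, since $\Sing X$ has codimension $\ge 2$ in the normal l.c.i.\ variety $X$. Together these bounds prevent any irreducible subset of $\p^{-1}(\Sing X)$ from projecting to a subset of $X_m$ of dimension $\ge (m+1)\dim X$.

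The main obstacle I anticipate is transferring the arc-space bound to a jet-space bound uniformly in $m$: the identification $\dim \ff_m(C) = (m+1)\dim X - \jetcodim(C, X_\infty)$ stabilizes only for $m$ large, and one must verify that a hypothetical rogue component of $X_m$ actually pulls back to a sufficiently large irreducible subset of $\p^{-1}(\Sing X)$. Greenberg's theorem, already used in the proof of \cref{t:welldefined-injective}, handles this for $m$ large by controlling the image of $\ff_m$, after which irreducibility descends to smaller $m$ via the truncation maps $\p_{n,m}\colon X_n \to X_m$.
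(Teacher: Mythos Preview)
Your first paragraph is precisely the paper's argument: since the domain of $\Psi_m^X$ is the singleton $\{\ov{(X_\reg)_\infty}\}$, well-definedness and surjectivity of $\Psi_m^X$ amount to the irreducibility of $X_m$, and the paper simply invokes \cite{Mus01} at that point. Nothing further is required, and you could stop there.

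The remaining paragraphs go beyond the paper by sketching an independent proof of Musta\c t\u a's irreducibility theorem via arc-space codimension estimates, and that sketch has a genuine gap at the descent step. You assert that once $X_n$ is irreducible for large $n$, irreducibility ``descends to smaller $m$ via the truncation maps $\pi_{n,m}$''; but these maps are not surjective, even for canonical l.c.i.\ singularities. For the $A_1$ surface $\{xy+z^2=0\}$ the $1$-jet $(x,y,z)=(t,0,t)$ at the origin satisfies $xy+z^2\equiv t^2\pmod{t^3}$ regardless of the choice of second-order coefficients, so it does not lift to any $2$-jet and $\pi_{2,1}$ misses it. Thus irreducibility of $X_n$ does not formally imply irreducibility of $X_m$ for $m<n$. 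The step preceding descent is likewise incomplete: a hypothetical extra component $D\subset X_m$ may consist entirely of jets that lift to no arc, in which case $D$ is not dominated by any subset of $X_\infty$, and your bound $\jetcodim(C,X_\infty)\ge 2$ for $C\subset\pi^{-1}(\Sing X)$ never constrains it. Greenberg's theorem describes the \emph{image} of $\psi_m$, not its complement, so it does not close this gap. Musta\c t\u a's actual proof avoids this issue by working directly at the finite-jet level, bounding $\dim\pi_m^{-1}(\Sing X)$ via contact loci on a log resolution of the ambient pair $(A,X)$.
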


\begin{proof}
As $X_\infty$ has only one non-degenerate irreducible component
(and in fact only one irreducible component
since it is irreducible by Kolchin's theorem \cite{Kol73}), 
this is just a restatement of Musta\c t\u a's theorem on the irreducibility of 
the jet schemes, since any additional irreducible component
of $X_m$ would lie over the singular locus of $X$ and therefore would not contain
the image of $X_\infty$. 
\end{proof}

Like in Musta\c t\u a's theorem, we will be focusing on 
locally complete intersection canonical singularities.
Our goal is to find a class of singularities
for which $\Psi_m^{\Sing X}$ is surjective. 
%This will apply in particular to prove the surjectivity of $\Psi_m^{\Sing X}$
%when $X$ has isolated singularities. 

To get a sense of what one can expect,  
we start by reviewing some cases that are already understood. 

\begin{example}[Nodal curve]
The case where $X$ is a nodal curve already shows
that one cannot expect $\Psi_m^{\Sing X}$ to be always surjective.
Indeed, if $x \in X$ is a node, then for $m \ge 3$ the set $X_m^x$ has $m-1$ irreducible
components, and only two of them are in the image of $\Psi_m^x$.
\end{example}

\begin{example}[Affine cones]
Let $V \subset \P^{N-1}$ be a smooth complete intersection
variety defined by equations of degree $r$, let 
$X \subset \A^N$ be the affine cone over $V$, and let $x \in X$ be the vertex.
As the blow-up of $x$ gives a resolution of $X$ with a single 
exceptional divisor, one easily see that $X_\infty^x$ is irreducible.
On the other hand, for every $m \ge r$ we have 
\[
\p_m^{-1}(x) \cong X_{m-r} \times \A^{N(r-1)},
\]
see, e.g., the proof of \cite[Theorem~3.5]{dFEM03}.
By \cite[Theorem~0.1]{Mus01}, we know that
if $X$ is canonical then $X_m$ is irreducible for all $m$, 
and conversely, using also \cite[Proposition~1.6]{Mus01}, 
if $X$ is not canonical at $x$ then $X_m$ is reducible 
for all $m \gg 1$. It follows that $X_m^x$ is irreducible
(hence $\Psi_m^x$ is surjective) for all $m \ge r$ if $X$ is canonical, and is reducible
(hence $\Psi_m^x$ fails to be surjective) for all $m \gg 1$ if $X$ is not canonical.
\end{example}

Mourtada, in part in collaboration with Pl\'enat and Cobo, 
has studied the irreducible decomposition of $X_m^{\Sing X}$ in many 
explicit situations where $X$ is a surface
%, and looked at the graph generated by the 
%irreducible components of $X_m^{\Sing X}$ for all $m \ge 0$, 
%where edges are drawn 
%whenever an irreducible component of $\p_{m+1}^{-1}(\Sing X)$ 
%maps into an irreducible component of $\p_m^{-1}(\Sing X)$ under the truncation map $X_{m+1}\to X_m$
\cite{Mou14,Mou17,MC18,CM21};
see also \cite{Kor22} for related work. 
While in some cases these results indicate that
the number of components continues to grow with $m$, there are also
cases where
the number of components stabilizes and matches the number of Nash families. 

%The next example shows that surjectivity can fail for normal singularities.

\begin{example}[Toric surface singularities]
The irreducible decomposition of $X_m^{\Sing X}$ was computed 
for toric surfaces by Mourtada \cite{Mou17}, and the only case where we have 
the same number of components as Nash families is when $X$ has $A_n$-singularities.
\end{example}

\begin{example}[Du Val singularities]
It is proved in \cite{Mou14} that, for $m \gg 1$, 
the number of families of $m$-jets through a Du Val singularity coincides with the number of exceptional
divisors on the minimal resolution, hence with the number of Nash families of arcs.
It follows in particular that in this case $\Psi_m^{\Sing X}$ is a bijection. 
\end{example}

\begin{example}[cA-type singularities]
Another case where we can check directly that $\Psi_m^{\Sing X}$ is a bijection
is that of $cA$-type singularities. Nash families of arcs on these singularities
were described in \cite{JK13}, and the deformation argument used in their proof
can be adapted to show that, for $m \gg 1$, there is the same number of families of $m$-jets
through the singularity, proving that $\Psi_m^{\Sing X}$ is a bijection in this case as well.
More specifically, suppose $X$ is defined by an equation
\[
xy = f(z_1,\dots,z_{d-1})
\]
in $A = \A^{d+1}$, where $\m := \mult_0(f) \ge 2$. 
The proof in \cite{JK13} begins by identifying 
$\m - 1$ irreducible open sets $U_i \subset X_\infty^0$, 
for $1 \le i \le \m - 1$, given by
\[
U_i = \{ \a \in X_\infty^0 \mid 
\ord_\a(x) = i,\, \ord_\a(y) = \m - i,\, \ord_\a(f)= \m \}.
\]

The proof then goes by showing that every arc $\a \in X_\infty^0$
can be deformed (in $X_\infty^0$) to an arc $\a^*$ with $\ord_{\a^*}(f)= \m$. 
Clearly such arc must belong to one of the $U_i$, hence proving that
the closures of these sets give all irreducible components of $X_\infty^0$. 
The deformation is done in several steps: first, one
deforms $\a$ to an arc $\a'$ with $\ord_{\a'}(f) < \infty$, 
and if $\ord_{\a'} > \m$, then one deforms $\a'$ to
an arc $\a''$ with $\ord_{\a''}(f) < \ord_{\a'}(f)$. 
After a finite number of steps, this process produces the desired arc $\a^*$. 

This argument can be adapted to characterize the irreducible components of 
$X_m^0$, for any given $m \ge \m$, as follows. 
For $1 \le i \le \m - 1$, we consider the irreducible open sets
\[
V_i = \{ \b \in X_m^0 \mid 
\ord_\b(x) = i,\, \ord_\b(y) = \m - i,\, \ord_\b(f)= \m\}.
\]
Given any $\b \in X_m^0$, we take any lift $\a \in A_\infty^0$
(i.e., any arc $\a$ on $A$ such that $\psi_m^A(\a) = \b$)
and apply the same deformation argument as in \cite{JK13}
to produce a new arc $\a^* \in A_\infty^0$ such that
$\ord_{\a^*}(f) = \m$. In fact, without loss of generality
we can pick $\a$ so that $\ord_\a(f) < \infty$, 
hence skip the first deformation
and just deform to reduce $\ord_\a(f)$ if the order of contact 
is larger than $\m$. 
The key observation here is that, just like in \cite{JK13}
the deformation keeps the arc on $X$,
in this setting the deformation maintains the order of contact of the arc with $X$, hence the corresponding deformation at level $m$ stays on $X_m$. 
\end{example}

The above examples are mainly understood through their equations. 
Our goal is to identify a new class of examples of arbitrary dimensions
where $\Psi_m^{\Sing X}$ is surjective, without having to rely on
explicit equations. This will be done in the next two sections.

\section{Singularities of maximal embedding codimension}

For a local ring $(R,\fm)$ we denote by $\dim(R)$ the {Krull dimension},
by $\embdim(R)$ the {embedding dimension}
(the dimension of the Zariski tangent space)
and by $\embcodim(R)$ the {embedding codimension}
(the codimension of the tangent cone in the Zaristi tangent space).
When $R$ is Noetherian, the latter is also known as the 
{regularity defect} \cite{Lec64} and is equal to $\embdim(R) - \dim(R)$.

We start by establishing the following bound on embedding codimension
for normal locally complete intersection singularities.
The bound is likely known to experts. 

\begin{proposition}
\label{p:mld-bound}
Let $X$ be a normal locally complete intersection variety. Then
\[
\embcodim(\O_{X,x}) \le \dim(\O_{X,x}) - \mld_x(X)
\] 
for every $x \in X$.
\end{proposition}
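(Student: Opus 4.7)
The plan is to combine inversion of adjunction (\cref{t:inv-adj}) with the elementary observation that, for a minimal embedding, the defining ideal of $X$ lies in the square of the maximal ideal, and then test the resulting pair against a single well-chosen divisor.

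First I would dispose of the trivial case where $x$ is the generic point of $X$, since there both sides vanish. Otherwise, set $d = \dim(\O_{X,x})$, $e = \embdim(\O_{X,x})$, and $c = e - d = \embcodim(\O_{X,x})$, and work locally at $x$, embedding $X$ as a closed subvariety of a smooth variety $A$ with $\dim(\O_{A,x}) = e$. Because $X$ is locally complete intersection, this forces $\codim(X, A) = c$ in a neighborhood of $x$, and \cref{t:inv-adj} gives
\[
\mld_x(X) = \mld_x(A, cX).
\]

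The key observation is that $\I_{X,x} \subset \fm_{A,x}^2$: this follows from minimality of the embedding, since the surjection $\fm_{A,x}/\fm_{A,x}^2 \twoheadrightarrow \fm_{X,x}/\fm_{X,x}^2$ between cotangent spaces of equal dimension $e$ must be an isomorphism. I would then test the pair $(A, cX)$ against the exceptional divisor $F$ of the blow-up of $\overline{\{x\}}$ in $A$. Since $\overline{\{x\}}$ is regular at its generic point, one has $a_F(A) = \dim(\O_{A,x}) = e$, and the key observation gives $\ord_F(\I_X) \ge 2$. Therefore
\[
a_F(A, cX) = a_F(A) - c \cdot \ord_F(\I_X) \le e - 2c = d - c,
\]
and combining with inversion of adjunction yields $\mld_x(X) \le d - c$, which rearranges to the desired inequality.

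I do not expect a genuinely hard step here. The heart of the argument is the interplay between the minimal embedding (which forces $\ord_F(\I_X) \ge 2$) and the coincidence $\codim(X, A) = \embcodim(\O_{X,x})$ (which lets inversion of adjunction transport the bound across the embedding). The only mild subtlety is ensuring that one can realize such a local embedding with $\dim(\O_{A,x}) = e$, which is standard and in any case can be circumvented by passing to formal completions.
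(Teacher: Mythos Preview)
Your argument is correct and takes a genuinely different, more elementary route than the paper's. Both proofs reduce via inversion of adjunction to bounding $\mld_x(A,cX)$ for a smooth ambient $A$, but the paper works with an \emph{arbitrary} affine embedding $X \subset A = \A^N$ and then uses jet-scheme machinery: it computes $\dim \overline{X_1^x}$ via $\Omega_{X/k}\otimes k_x$, forms the cylinder $V \subset A_\infty$ over $\overline{X_1^x}$, extracts the divisorial valuation $v = p\,\ord_F$ defined by $V$ (which satisfies $v(\I_X) \ge 2$ because arcs in $V$ have their $1$-jet on $X$), and bounds $a_F(A,cX)$ through $\codim(V,A_\infty)$. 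You instead choose a \emph{minimal} embedding so that $\I_{X,x}\subset\fm_{A,x}^2$ holds on the nose, and then a single blow-up of $\overline{\{x\}}$ already produces a divisor $F$ with $a_F(A)=e$ and $\ord_F(\I_X)\ge 2$, giving the bound with no multiplicity $p$ to clear and no arc space in sight.

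What each approach buys: your proof is shorter and avoids all arc-theoretic input beyond \cref{t:inv-adj}; it is the natural ``test against the obvious divisor'' argument. The paper's proof, though heavier for this proposition alone, is a deliberate warm-up for the proof of \cref{t:surjective}, where essentially the same cylinder construction (now over an irreducible component $D$ of $X_m^x$ rather than over $\overline{X_1^x}$) is the engine driving surjectivity of $\Psi_m^x$. So the paper's choice is structural rather than forced. Your remark that the minimal local embedding is standard is well taken; the usual Jacobian-criterion slicing of an arbitrary affine embedding suffices, and no passage to completions is needed.
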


\begin{proof}
The assertion being trivial if $\mld_x(X) = -\infty$, we assume that
$\mld_x(X) \ge 0$.  
Working locally in $X$, we may assume that $X$ is embedded in an affine space $A := \A^N$.
Let $d = \dim(X)$, $r = \dim(\O_{X,x})$, $e = \embcodim(\O_{X,x})$ 
and $c = \codim(X,A)$. 
By inversion of adjunction (see \cref{t:inv-adj}), 
\[
\mld_x(X) = \mld_x(A,cX).
\]

Let $\fm_x \subset \O_{X,x}$ be the maximal ideal.
By applying \cite[Theorem~25.2]{Mat89} to the sequence
$k \to \O_{X,x} \to k_x$, we get the exact sequence
\[
0 \to \fm_x/\fm_x^2 \to \Om_{X/k} \otimes k_x \to \Om_{k_x/k} \to 0.
\]
This gives
\[
\dim_{k_x}(\Om_{X/k}\otimes k_x) = \embdim(\O_{X,x}) + d-r = d + e.
\]
By the isomorphism $X_1 \cong \Spec(\Sym(\Om_{X/k}))$
(see \cite[Example~2.5]{EM09} or \cite[(1.4)]{Voj07}), 
%$\Om_{X/k}\otimes k_x$ and the set $X_1^x$ of 1-jets stemming from $x$, 
we have $X_1^x \cong \Spec(\Sym(\Om_{X/k} \otimes k_x))$,
%\[
%\dim_{k_x}(X_1^x) = \dim_{k_x}(\Om_{X/k}\otimes k_x) = d + \d,
%\]
hence
\[
\dim_k(\ov{X_1^x}) = \dim_{k_x}(X_1^x) + d - r  = 2d + e - r.
\]
%Furthermore, the closure $\ov{X_1^x} \subset X_1$ of $X_1^x$ is an irreducible closed set, so its 
The reduced inverse image $V \subset A_\infty$ 
of the closure $\ov{X_1^x} \subset A_1$ of $X_1^x$ 
%under the projection $A_\infty \to A_1$ 
is a closed irreducible cylinder.
% in $A_\infty$. 
Let $v$ be the valuation defined by
$V$ (namely, $v = \ord_\a$ where $\a \in V$ is the generic point).
By \cite[Theorem~C]{ELM04}, 
$v$ is a divisorial valuation, i.e., $v = p\ord_F$ where $F$
is a prime divisor over $A$ and $p$ is a positive integer. 
Note that, by construction, we have $v(\I_X) \ge 2$. 
If $C_A(v) \subset A_\infty$ is the maximal divisorial set associated
to the valuation, then we have $V \subset C(v)$, hence 
\[
\codim(V,A_\infty) \ge \codim(C_X(v),A_\infty) = p\,a_F(A)
\]
(the last formula is implicit in \cite{ELM04}; 
for a direct reference, see \cite[Theorem~3.8]{dFEI08}).
On the other hand, 
\begin{align*}
\codim(V,A_\infty) 
&= \codim(\ov{X_1^x},A_1) \\
&= \dim(A_1) - \dim(\ov{X_1^x})\\
&= 2(d+c) - (2d + e - r)\\
&= r - e - 2c.
\end{align*}
It follows that
\[
\mld_x(A,cX) \le a_F(A, cX) \le \frac 1p \big(\codim(V,A_\infty) - 2c\big) \le r-e,
\]
where we use in the last inequality that $\mld_x(X) \ge 0$ to ensure that
the inequality is preserved when we clear the denominator $p$. 
\end{proof}

\begin{definition}
\label{d:max-ecodim}
In accordance with \cref{p:mld-bound}, we say that a normal locally complete intersection variety
$X$ has \emph{maximal embedding codimension singularities} if
\[
\embcodim(\O_{X,x}) = \dim(\O_{X,x}) - \mld_x(X)
\] 
for every $x \in X$.
\end{definition}

\begin{remark}
Smooth varieties have maximal embedding codimension singularities.
\end{remark}

\begin{remark}
Every locally complete intersection variety with maximal embedding codimension singularities
has log canonical singularities, since the condition 
implies that $\mld_x(X) \ne - \infty$ hence $\mld_x(X) \ge 0$ for all $x \in X$.
Note that if $X$ is a curve then normality already implies that $X$ is smooth.
\end{remark}

\begin{example}[Hypersurface singularities]
A normal hypersurface singularity $x \in X$ has maximal embedding codimension 
if and only if $\mld_x(X) = \dim(\O_{X,x}) - 1$.
In particular Du Val singularities in dimension 2 and isolated 
cDV singularities of dimension 3 are all the examples in these dimensions
of isolated hypersurface singularities of maximal embedding codimension
(cf.\ \cite{Rei83}).
\end{example}

\section{Higher Du Val singularities}

We now identify a particular subclass of locally complete intersection 
varieties with maximal embedding codimension singularities
which can be thought as a higher dimensional version of Du Val singularities.

\begin{definition}
\label{d:crepant-sing}
Let $X$ be a normal locally complete intersection variety of dimension $d \ge 2$.
We say that a point $x \in X$ is a \emph{higher Du Val} (hDV) \emph{singularity}  if
\[
\mld_x(X) = \dim(\O_{X,x}) - \embcodim(\O_{X,x}) = 1.
\] 
\end{definition}

By definition, hDV singularities are canonical but not terminal.
They can be locally embedded as complete intersection
singularities of codimension $d-1$ in $\A^{2d-1}$ 
(cf.\ \cite[Theorem~3.15]{CdFD22})
but not in any smaller affine space.  
In dimension two, these are the same as the Du Val singularities.

\begin{remark}
It is useful to compare the above definition with another
classical way of generalizing Du Val singularities, namely,
compound Du Val singularities.
Compound Du Val singularities preserve two properties of Du Val singularities: 
being hypersurface singularities, 
and having minimal log discrepancy $\mld_x(X) = \dim(X) - 1$. 
By contrast, the definition of hDV singularities preserves the condition that $\mld_x(X) = 1$
and requires maximal embedding codimension.
The attribute ``higher'' in hDV singularity reflects at the same time that
these are higher dimensional and higher codimensional 
generalizations of Du Val singularities.
\end{remark}

\begin{remark}
If we extended \cref{d:crepant-sing} to the case $d=1$, then in dimension one the definition
would characterize smooth points on curves. This says something meaningful
about the behavior of this notion as a function of dimension. 
We prefer to assume $d \ge 2$ as we want to regard this as defining
a class of actual singular points.
\end{remark}

\begin{example}[Intersections of quadric cones]
In higher codimensions, the simplest example of a hDV singularity 
is the cone $X \subset \A^{2e+1}$ over the transversal intersection
of $e$ smooth quadrics in $\P^{2e}$. 
The blow-up of the vertex $x$ of the cone gives a log resolution
of $(\A^{2e+1},X)$, and
\[
\mld_x(X) = \mld_x(\A^{2e+1},eX) = 1
\]
where the minimal log discrepancy is computed by the exceptional divisor
of the blow-up.
\end{example}

More generally, we have the following set of examples, 
which shows the clear analogy with Du Val singularities. 

\begin{proposition}
\label{p:eg-hDV}
Let $e \ge 1$, let $(u_1,\dots,u_{2e-2},x,y,z)$ denote affine coordinates of $\A^{2e+1}$, 
and let $X \subset \A^{2e+1}$ be the subvariety defined by the vanishing of 
$e$ general linear combinations of any finite set of generators of
the ideal 
\[
\fa = (u_1,\dots,u_{2e-2})^2 + \fb
\]
of $k[u_1,\dots,u_{2e-2},x,y,z]$, where $\fb$ is one of the following:
\[
\fb = 
\begin{cases}
(x^2,y^2,z^{n+1}) \quad (n \ge 1) &\text{$A_n$-type} \\
(z^2,x^2y,y^{n-2}) \quad (n \ge 4) &\text{$D_n$-type} \\
(z^2,x^3,y^4) &\text{$E_6$-type} \\
(z^2,x^3,xy^3) &\text{$E_7$-type} \\
(z^2,x^3,y^5) &\text{$E_8$-type}
\end{cases}
\]
Then $X$ has a hDV singularity at the origin $0 \in \A^{2e+1}$. 
\end{proposition}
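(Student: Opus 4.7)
My plan is to verify the three conditions of \cref{d:crepant-sing}: that $X$ is a normal locally complete intersection variety of dimension $d = e+1$, that $\embcodim(\O_{X,0}) = e = d-1$, and that $\mld_0(X) = 1$. The first two conditions will follow from the explicit form of the ideal $\fa$, while the minimal log discrepancy will be handled by inversion of adjunction (\cref{t:inv-adj}) together with \cref{p:mld-bound}.

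First I would check that $\fa$ is $\fm$-primary in $k[u_1,\dots,u_{2e-2},x,y,z]$: set-theoretically $V(\fa) = V(u_1,\dots,u_{2e-2}) \cap V(\fb)$, and in each of the five ADE cases one verifies directly that $\fb$ is $\fm$-primary in $k[x,y,z]$. Since $\fa$ is $\fm$-primary in a $(2e+1)$-dimensional regular local ring, a generic sequence of $e$ elements of $\fa$ forms a regular sequence, so $X$ is a complete intersection of dimension $e+1$. A Bertini-type argument applied to the linear system of generators of $\fa$, whose base locus in $\A^{2e+1}$ is the single point $0$, then shows that $X$ is smooth off the origin, and hence normal by Serre's criterion since the singular locus has codimension $e+1 \ge 2$ in $X$. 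Next, because $\fa \subset \fm^2$, every generic $f_i$ has vanishing linear part at $0$, so $\embdim(\O_{X,0}) = 2e+1$ and $\embcodim(\O_{X,0}) = e$.

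With these in place, \cref{p:mld-bound} immediately gives $\mld_0(X) \le 1$. For the reverse inequality I would use inversion of adjunction to rewrite $\mld_0(X) = \mld_0(\A^{2e+1}, eX)$, and then aim to show that the exceptional divisor $E$ of the blow-up of the origin computes this minimum. Since $\fa$ contains elements of order exactly two at $0$ (e.g.\ $u_1^2$ or $x^2$), generic linear combinations also have order two, so $\ord_E(I_X) = 2$ and $a_E(\A^{2e+1}, eX) = (2e+1) - 2e = 1$, matching the upper bound. The remaining, and most delicate, task is to verify that no divisor $F$ centered at $0$ achieves $a_F(\A^{2e+1}, eX) < 1$, i.e., that the pair $(\A^{2e+1}, eX)$ is canonical at $0$.

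The main obstacle lies here. My plan is to analyze the strict transform $\widetilde X$ of $X$ on $Y = \Bl_0 \A^{2e+1}$: a direct computation on each standard chart of $Y$ shows that the transformed equations retain the same block structure as the original, with the $(u)^2$-part essentially unchanged and the $\fb$-part replaced by a simpler one (for instance in the $A_n$ case the $z$-chart yields $\widetilde{f}_i = Q_i(u') + \alpha_i (x')^2 + \beta_i (y')^2 + \gamma_i z^{n-1}$, while the $u_j$-charts do not meet $\widetilde X$ at the distinguished point). One then checks that either $\widetilde X \cap E$ is already smooth, as in the $A_1$ and $A_2$ cases, or that its singularities fit inductively into the same scheme, so that iterating the blow-up a bounded number of times (with the bound dictated by the ADE type) produces a log resolution in which every exceptional divisor has log discrepancy $\ge 1$ over $(\A^{2e+1}, eX)$. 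As a sanity check, setting $e = 1$ recovers the classical Du Val surface equations (after a harmless change of generators), for which the required computation is the well-known one; the novelty in general $e$ is the bookkeeping needed to track the interaction of the $(u)^2$ factor with the successive blow-ups in the $D$ and $E$ types, which is where I would expect most of the technical work to concentrate.
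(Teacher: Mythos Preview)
Your proposal is correct, but your route to the lower bound $\mld_0(X)\ge 1$ differs substantially from the paper's. You propose to build an explicit log resolution by iterated point blow-ups, tracking the block structure $(u)^2+\fb$ through each chart and checking all log discrepancies by hand. The paper instead exploits the separated-variables form of $\fa$: by the Thom--Sebastiani formula,
\[
\lct(\fa)=\lct\bigl((u_1,\dots,u_{2e-2})^2\bigr)+\lct(\fb)=(e-1)+\lct(\fb),
\]
and $\lct(\fb)>1$ is the classical fact for the Du Val ideals (or follows from Howald's formula, since $\fb$ is monomial). Thus $\lct(\fa)>e$, which gives $\mld_0(\A^{2e+1},eX)>0$; integrality of log discrepancies on the Gorenstein variety $X$ (via inversion of adjunction) then forces $\mld_0(X)\ge 1$.

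The paper's argument is uniform in $e$ and in the ADE type, reducing the entire lower bound to the single three-variable inequality $\lct(\fb)>1$. Your approach is valid---the $A_n$ induction you sketch does go through, with every exceptional divisor landing exactly at log discrepancy $1$---but it obliges you to rerun the full ADE resolution pattern in each dimension, and the $D_n$, $E_7$, $E_8$ cases are genuine work that you have only outlined. On the plus side, your computation would actually produce the log resolution and the list of crepant exceptional divisors, information the paper's $\lct$ argument does not directly supply.
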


\begin{proof}
Clearly, $X$ is a complete intersection variety with an isolated singularity at the origin, 
and $\embcodim(\O_{X,0}) = e$. What is left to show is that $\mld_0(X) = 1$. 
Note that $\mld_0(X) = \mld_0(\A^{2e+1},eX)$. 
By looking at the exceptional divisor of
the blow-up of $\A^{2e+1}$ at the origin, we see that $\mld_0(\A^{2e+1},eX) \le 1$. 
On the other hand, a special case of the Thom--Sebastiani theorem (see \cite[Proposition~8.21]{Kol97})
gives us the following formula for the log canonical thresholds of $\fa$:
\[
\lct(\fa) = \lct((u_1,\dots,u_{2e-2})^2) + \lct(\fb) = e-1 + \lct(\fb).
\]
What we know about Du Val singularities already tells us that $\lct(\fb) > 1$; 
this can also be checked directly using Howald's formula for the log canonical 
threshold of monomial ideals \cite{How01}. 
Therefore $\lct(\fa) > e$, hence $\mld_0(\A^{2e+1},eX) > 0$. 
We conclude that $\mld_0(\A^{2e+1},eX) = 1$, as required. 
\end{proof}

\begin{remark}
Assuming $k = \C$, hDV singularities are closely related 
certain hypersurface singularities studied by Arnol'd \cite{Arn72}. 
These are isolated hypersurface singularities 
characterized by the property that their versal deformations 
only contain finitely many analytically inequivalent singularities, and are known 
as \emph{simple singularities}. 
They were classified in \cite{Arn72}; see also \cite[Example~(3.4)]{Bur74}. 
In the notation of \cref{p:eg-hDV}, for any $\fa$ 
(which, according to the proposition, corresponds to an example of a hDV singularity)
the vanishing of a general element $h \in \fa$ 
defines a simple singularity, and all simple singularities arise in this way. 
Conversely, the examples of hDV singularities provided by \cref{p:eg-hDV}
are complete intersections of simple singularities of the same type. 
\end{remark}

\begin{proposition}
\label{p:crepant-sing-isolated}
Let $X$ be a variety with hDV singularities.
Then $X$ has isolated singularities. 
\end{proposition}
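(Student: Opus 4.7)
My plan is to argue by contradiction. Assume $\Sing X$ has an irreducible component $T$ of positive dimension. Every closed point $y \in T$ lies in $\Sing X$, so by hypothesis $y$ is hDV; in particular $\embdim(\O_{X,y}) = \embcodim(\O_{X,y}) + d = (d-1) + d = 2d - 1$. The $1$-jet fiber
\[
X_1^y = \p_1^{-1}(y)_\red \cong \Spec \Sym(\Om_{X/k} \otimes k_y) \cong \A_k^{2d-1}
\]
is therefore an irreducible closed subvariety of $X_1$ of dimension exactly $2d - 1$.

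The next step is to invoke Musta\c t\u a's theorem \cite{Mus01}. Since hDV singularities are canonical (their mld equals $1 > 0$) and $X$ is locally complete intersection, $X_1$ is irreducible of dimension $2d$. Because $T \subsetneq X$ (the singular locus is proper in the normal variety $X$), the closed subscheme $\p_1^{-1}(T)$ is a proper closed subscheme of the irreducible $X_1$, so $\dim \p_1^{-1}(T) \le 2d - 1$.

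Now for each closed $y \in T$, the irreducible subvariety $X_1^y \subset \p_1^{-1}(T)$ has the maximum possible dimension $2d - 1$. Any irreducible component of $\p_1^{-1}(T)$ containing $X_1^y$ is therefore itself an irreducible closed subset of dimension $2d - 1$ containing the irreducible subset $X_1^y$ of the same dimension, and so must coincide with $X_1^y$. Distinct closed points of $T$ produce disjoint fibers $X_1^y$, hence distinct irreducible components of $\p_1^{-1}(T)$. Since $T$ is positive-dimensional over the algebraically closed field $k$, it contains infinitely many closed points, which would force $\p_1^{-1}(T)$ to have infinitely many irreducible components. This contradicts the fact that $\p_1^{-1}(T)$ is a closed subscheme of the finite-type (hence Noetherian) scheme $X_1$, and so $\Sing X$ must be zero-dimensional.

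The crux of the argument, and the step I would verify most carefully, is the dimension formula $\dim_k X_1^y = 2d - 1$: it relies on the precise equality $\embdim = \embcodim + \dim = (d - 1) + d$ forced by the maximal embedding codimension condition in the definition of hDV. This is what makes each jet-fiber $X_1^y$ already be codimension exactly $1$ in $X_1$, and, combined with the irreducibility of $X_1$ provided by Musta\c t\u a's theorem, already be a component of $\p_1^{-1}(T)$; a positive-dimensional family of such fibers cannot fit inside a Noetherian closed subscheme.
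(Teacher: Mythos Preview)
Your argument is correct and takes a genuinely different route from the paper's proof.

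The paper argues via a log resolution $f\colon Y\to X$ that is an isomorphism over $X_\reg$, with reduced exceptional locus $E$. If $\dim(\Sing X)\ge 1$, one picks a closed point $x\in\Sing X$ that is not the center of any irreducible component of $E$; then $f^{-1}(x)$ has codimension $\ge 2$ in $Y$, so any prime divisor $F$ with $c_X(F)=x$ satisfies $\ord_F(K_{Z/Y})\ge 1$ on a higher model $Z$, whence $a_F(X)\ge 2$, contradicting $\mld_x(X)=1$.

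Your approach is jet-theoretic instead: the maximal embedding codimension condition forces the Zariski tangent space at every closed singular point to have the extremal dimension $2d-1$, and Musta\c{t}\u{a}'s irreducibility theorem for $X_1$ then converts a positive-dimensional family of such tangent spaces into infinitely many irreducible components of the Noetherian scheme $\p_1^{-1}(T)$. This fits nicely with the paper's jet-scheme viewpoint and reuses exactly the tangent-space computation from the proof of \cref{p:mld-bound}. The paper's resolution-based argument, by contrast, is more self-contained in that it avoids the nontrivial input of \cite{Mus01} and works directly from the definition of minimal log discrepancy; it also makes transparent the purely birational mechanism (every divisor centered at $x$ must pass through a codimension-$\ge 2$ locus on the resolution) behind the failure of $\mld_x=1$.
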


\begin{proof}
Let $f \colon Y \to X$ be a log resolution
that is an isomorphism over $X_\reg$, 
and let $E$ be the reduced exceptional locus.
Note that $K_{Y/X} \ge 0$.

If $\dim(\Sing X) \ge 1$, then we can find
a closed point $x \in \Sing X$
such that $x$ is not the center of any component of $E$. 
On the other hand, $x \in f(E)$. 
Now, let $F$ be an arbitrary prime divisor over $X$ with center $c_X(F) = x$.
We may assume that $F$ lies on a nonsingular model $g \colon Z \to Y$. 
Since $f^{-1}(x)$ has codimension at least 2 in $Y$ 
and contains the center of $F$ in $Y$, we have $\ord_F(K_{Z/Y}) \ge 1$.
It follows that $\ord_F(K_{Z/X}) \ge 1$, hence $a_F(X) \ge 2$. 
This contradicts the fact that, by hypothesis, $\mld_x(X) = 1$.
\end{proof}

\begin{theorem}
\label{t:surjective}
Let $x \in X$ be a hDV singularity.
\begin{enumerate}
\item
\label{i2:surjective}
The multivalued function $\Psi_m^x$ is surjective for all $m$.
\item
\label{i3:surjective}
An irreducible set $C \subset X_\infty^x$ is a non-degenerate irreducible 
component if and only if $C = C_X(E)$ for some
prime divisor $E$ over $X$ with center $c_X(E) = x$
and log discrepancy $a_E(X) = 1$.
\end{enumerate}
\end{theorem}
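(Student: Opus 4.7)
My plan is to establish (ii) first via the inversion of adjunction argument from the proof of \cref{p:mld-bound}, and then to deduce (i) from (ii) by lifting jets to arcs.

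For the setup, using that $\embdim(\O_{X,x}) = 2d-1$ by maximal embedding codimension, I embed $X$ locally at $x$ into $A = \A^{2d-1}$, so that $c := \codim(X,A) = d-1$. Consider the cylinder $V = (\ff_1^A)^{-1}(X_1^x) \subset A_\infty$ from the proof of \cref{p:mld-bound}. Two observations drive the rest of the argument. First, $V \cap X_\infty = X_\infty^x$, because the condition that the 1-jet lie in $T_xX \subset T_xA$ forces the basepoint to coincide with $x$ as soon as the arc is on $X$. Second, if $v_F = p\ord_F$ is the generic valuation of $V$ as an irreducible cylinder, then $V = C_A(v_F)$ (both are the closure in $A_\infty$ of the same generic point), and the hDV equality $r-e = 1$ forces the inequalities in the proof of \cref{p:mld-bound} to all be equalities, giving $p = 1$ and $a_F(A,cX) = 1 = \mld_x(A,cX) = \mld_x(X)$.

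For the $(\Rightarrow)$ direction of (ii), I let $W$ be a non-degenerate irreducible component of $X_\infty^x = C_A(v_F) \cap X_\infty$. The inversion of adjunction argument from the proof of \cref{p:mld-bound} applies directly to $W$: its generic valuation $w = q\ord_E$ on $X$ satisfies $q\,a_E(X) \le a_F(A,cX) = 1$, and since $\mld_x(X) = 1 > 0$, \cref{r:inv-adj=} forces $q = 1$, $a_E(X) = 1$, and $W = C_X(E)$. For the $(\Leftarrow)$ direction, given a divisor $E$ with $c_X(E) = x$ and $a_E(X) = 1$, the divisor $E$ is crepant over the canonical lci variety $X$, so by MMP (BCHM) it can be extracted as an exceptional divisor of a $\Q$-factorial terminalization $Y \to X$. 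The main theorem of \cite{dFD16} then guarantees that $\ord_E$ is a Nash valuation, so $C_X(E)$ is a non-degenerate component of $X_\infty^{\Sing X} = X_\infty^x$ (using \cref{p:crepant-sing-isolated} for the equality).

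For (i), by (ii) the non-degenerate components of $X_\infty^x$ form a finite collection $C_X(E_1), \dots, C_X(E_n)$. Given any irreducible component $D$ of $X_m^x$, I pick a general point $\beta \in D$ and lift it to an arc $\alpha \in X_\infty^x$: for $D$ positive-dimensional the general $\beta$ lies outside the unique point of $(\Sing X)_m$ (the constant $m$-jet at $x$, since $\Sing X = \{x\}$ for hDV), so $\ord_\beta(\Jac_X)$ is finite and Greenberg's theorem \cite[Proposition~4.1(i)]{EM09} provides the desired lift. By (ii), $\alpha$ lies in some $C_X(E_i)$, hence $\beta \in \overline{\ff_m(C_X(E_i))}$, and by maximality of $D$ this forces $D = \overline{\ff_m(C_X(E_i))}$, proving that $\Psi_m^x(C_X(E_i)) = D$. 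The main obstacle will be the liftability step at small $m$, where the naive Greenberg bound may fail; I would handle this either by strengthening the bound through the lci canonical structure of hDV or by a descending induction on $m$ via the truncation maps $\p_{n,m}$ from a range where Greenberg applies unconditionally.
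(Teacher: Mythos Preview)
Your proof of (ii) is correct. One minor point: $V = C_A(v_F)$ does not follow from ``same generic point'' --- a priori the generic point of $C_A(v_F)$ could be a proper generalization of that of $V$, so one only has $V \subset C_A(v_F)$; equality requires the codimension match coming from the hDV equality case. The paper, incidentally, derives (ii) from (i) rather than proving it first.

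The real gap is in (i). Finiteness of $\ord_\beta(\Jac_X)$ is not what \cite[Proposition~4.1(i)]{EM09} gives you: as used in \cref{p:welldefined,p:injective}, that result says a jet $\gamma \in X_n$ with $s = \ord_\gamma(\Jac_X)$ lifts to an arc \emph{provided $\gamma$ is already known to extend to $X_{n+s}$}. For the generic point $\beta$ of an arbitrary component $D \subset X_m^x$ no such extension is available, and $s$ may be as large as $m$, so neither $\beta$ nor any useful truncation of it need lift. Your proposed fixes do not close this: the irreducibility of $X_m$ from \cite{Mus01} says nothing about liftability of jets based at $x$, and descending induction via $\pi_{m+1,m}$ presupposes that every component of $X_m^x$ receives a component of $X_{m+1}^x$, which is essentially the surjectivity you are trying to prove. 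The paper avoids liftability altogether by running your level-$1$ argument at level $m$: for each component $D$ of $X_m^x$, take the cylinder $V \subset A_\infty$ over $D$ in the smooth ambient space. Because $X_1^x = A_1^x$, the equations $f_i$ and $f_i'$ vanish identically on $A_m^x$, so $D$ is cut out of $A_m^x$ by at most $e(m-1)$ equations, giving $\codim(V,A_\infty) \le e(m+1)+1$. Combined with $v(\I_X) \ge m+1$ for the generic valuation $v$ of $V$, the chain of inequalities from \cref{p:mld-bound} collapses to equalities, forcing $V = C_A(v)$ with $v$ computing $\mld_x(A,eX)$. Then \cref{r:inv-adj=} produces a non-degenerate component $W = C_X(E)$ of $V \cap X_\infty$ with $a_E(X)=1$; this $W$ is a Nash component by \cite[Corollary~1.4.3]{BCHM10} and \cite{dFD16}, and $\ff_m(W) \subset D$ by construction. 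This dimension count at level $m$ is the missing idea.
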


\begin{proof}
By \cref{p:crepant-sing-isolated}, $x \in X$ is an isolated singularity. 

Let $d = \dim(X) = \dim(\O_{X,x})$ and $e = \embcodim(\O_{X,x})$. Note that, by our assumption, $e = d-1$. 
Though not strictly necessary, to simplify the notation we apply \cite[Theorem~3.15]{CdFD22} to
reduce to the case where $X$ is embedded in $A := \A^{d+e}$.

Let $f_1,\dots,f_e \in k[x_1,\dots,x_{d+e}]$ be local generators of the ideal of $X$ in $A$ at the point $x$.
For every $j \ge 1$, we denote by $f_i^{(j)}$ the $j$-th Hasse--Schmidt derivative of $f_i$. 
As $X_1^x = A_1^x$ (by our choice of embedding), the polynomials $f_i$ and $f_i'$ vanish 
identically on $A_1^x$, hence on $A_m^x$. 
Therefore, the ideal of $X_m^x$ in $A_m^x$ is generated by 
the elements $f_i^{(j)}$ for $1 \le i \le e$ and $2 \le j \le m$.
In particular, if $D$ is any irreducible component of $X_m^x$, then 
\[
\codim(D,A_m^x) \le e(m-1).
\]
Noticing that $\codim(A_m^x,A_m) = d+e = 2e+1$, it follows that
\[
\codim(D,A_m) \le e(m+1) + 1
\]

Let $V \subset A_\infty$ be the cylinder over $D \subset A_m$. This is a closed 
irreducible cylinder of codimension 
\[
\codim(V,A_\infty) = \codim(D, A_m) \le e(m+1) + 1.
\]
If $v = p \ord_F$ is the divisorial valuation defined by the generic point of $V$, then 
$V \subset C(v)$, hence
\[
\codim(V,A_\infty) \ge \codim(C_X(v),A_\infty) = p\,a_F(A). 
\]
Note that $v(\I_X) \ge m+1$. Then
\[
\mld_x(A,eX) \le \frac 1p \big(\codim(V,A_\infty) - e(m+1) \big) \le 1.
\]
Since by our assumption on the singularity we have $\mld_x(X) = 1$, 
and $\mld_x(X) = \mld_x(A,eX)$ by inversion of adjunction, 
it follows that all inequalities in the above formula are equalities, 
and in particular $V = C_A(v)$. 

We see from the proof of \cref{t:inv-adj} (see also \cref{r:inv-adj=}) that there is
a non-degenerate irreducible component $W$ of $V \cap X_\infty$.
Furthermore, any such component $W$ is equal to $C_X(E)$ for some prime divisor $E$ over $X$
with center $c_X(E) = x$ and log discrepancy $a_E(X) = 1$.
Note that $W \subset X_\infty^x$.

We may assume that $E$ is an exceptional divisor on a log resolution $f \colon X' \to X$ of $X$.
We apply \cite[Corollary 1.4.3]{BCHM10} to $X$ and $f$, with $\D = \D_0 = 0$ and $\mathfrak{E}$ equal to
the set of exceptional divisors with log discrepancy at most 1. 
The output of this operation is a terminal model $Y$ over $X$
where the center of $\val_E$ has codimension 1. 
This implies that $\val_E$ is a terminal valuation, hence, 
by \cite[Theorem~1.1]{dFD16}, a Nash valuation.

The fact that $W$ is the maximal divisorial set of a Nash valuation
implies that $W$ is an irreducible component of $X_\infty^x$. 
By construction, the image of $W$ in $X_m^x$ is contained in $D$, 
showing that $D$ is in the image of $\Psi_m^x$. 
This proves \eqref{i2:surjective}. 

To conclude, we use what we just proved and
the injectivity of $\Psi_m^x$ established in 
\cref{t:welldefined-injective} for $m \gg 1$ 
to infer that every non-degenerate irreducible component of $X_\infty^x$
is of the form $C_X(E)$ for some prime divisor $E$ over $X$
with center $c_X(E) = x$ and log discrepancy $a_E(X) = 1$.
Conversely, as explained above, \cite[Theorem~1.1]{dFD16}
implies that for every prime divisor $E$ over $X$ with center $c_X(E) = x$
and log discrepancy $a_E(X) = 1$, the
set $C_X(E)$ is an irreducible component of $X_\infty^x$.
This gives \eqref{i3:surjective}. 
\end{proof}

We apply this result to give a solution of the Nash problem for varieties
with hDV singularities. 

\begin{corollary}
\label{c:Nash-crepant-sing}
Let $X$ be a variety with hDV singularities.
For a divisorial valuation $\ord_E$ on $X$, the following are equivalent:
\begin{enumerate}
\item
$\ord_E$ is a Nash valuation.
\label{i1:Nash-crepant-sing}
\item
\label{i2:Nash-crepant-sing}
$\ord_E$ is a terminal valuation.
\item
\label{i3:Nash-crepant-sing}
$E$ is exceptional over $X$ and $a_E(X) = 1$.
\end{enumerate}
\end{corollary}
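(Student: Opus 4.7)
The plan is to deduce the corollary from Theorem~\ref{t:surjective}, the main theorem of \cite{dFD16} recalled in the introduction, and the extraction of terminal models provided by \cite[Corollary~1.4.3]{BCHM10}. By Proposition~\ref{p:crepant-sing-isolated}, hDV singularities are isolated, so $\Sing X$ consists of finitely many closed points and $X_\infty^{\Sing X}$ decomposes as the disjoint union $\bigsqcup_{x \in \Sing X} X_\infty^x$. Consequently, every Nash valuation on $X$ is the valuation at the generic point of some non-degenerate irreducible component of $X_\infty^x$ for a unique $x \in \Sing X$, and I will establish the equivalence by proving $(1)\Leftrightarrow(3)$ directly and then $(3)\Rightarrow(2)\Rightarrow(1)$.

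For $(1)\Leftrightarrow(3)$, the main input is the second part of Theorem~\ref{t:surjective}, which identifies the non-degenerate irreducible components of $X_\infty^x$ with the maximal divisorial sets $C_X(E)$ for prime divisors $E$ over $X$ satisfying $c_X(E) = x$ and $a_E(X) = 1$. The direction $(1)\Rightarrow(3)$ is then immediate, since any such $E$ has center of codimension $\dim X \ge 2$ and is therefore exceptional. For $(3)\Rightarrow(1)$ I first need to verify that $c_X(E) \in \Sing X$ before Theorem~\ref{t:surjective} can be applied, and this is the only small technical point: were $c_X(E)$ a point of $X_{\reg}$, then $X$ would be smooth there and the standard lower bound $a_E(X) \ge \mld_{c_X(E)}(X) = \codim(c_X(E), X) \ge 2$, valid for exceptional divisors over smooth loci, would contradict $a_E(X) = 1$. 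Once $c_X(E) \in \Sing X$ is in hand, Theorem~\ref{t:surjective} produces $C_X(E)$ as an irreducible component of $X_\infty^{c_X(E)} \subset X_\infty^{\Sing X}$.

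The implication $(2)\Rightarrow(1)$ will be a direct appeal to the main theorem of \cite{dFD16} recalled in the introduction, asserting that terminal valuations are Nash valuations in any dimension. For $(3)\Rightarrow(2)$ I will mimic the final step of the proof of Theorem~\ref{t:surjective}: because $X$ is canonical and $a_E(X) = 1$, \cite[Corollary~1.4.3]{BCHM10} extracts $E$ on a $\mathbb{Q}$-factorial terminal model $Y \to X$, realizing $\ord_E$ as a terminal valuation by definition. I expect no serious obstacle beyond bookkeeping, since the substantive content has already been established in Theorem~\ref{t:surjective}; the only step requiring genuine care is the codimension argument that forces $c_X(E) \in \Sing X$ in the reverse direction of the first equivalence.
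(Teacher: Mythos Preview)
Your proposal is correct and uses exactly the same three ingredients as the paper: Theorem~\ref{t:surjective}\eqref{i3:surjective} for $(1)\Rightarrow(3)$, \cite[Corollary~1.4.3]{BCHM10} for $(3)\Rightarrow(2)$, and \cite[Theorem~1.1]{dFD16} for $(2)\Rightarrow(1)$. The only difference is organizational: the paper runs the clean cycle $(3)\Rightarrow(2)\Rightarrow(1)\Rightarrow(3)$, whereas you also argue $(3)\Rightarrow(1)$ directly via Theorem~\ref{t:surjective}, which forces you to check $c_X(E)\in\Sing X$ --- a step the cycle approach avoids, since $(3)\Rightarrow(2)\Rightarrow(1)$ already yields $(3)\Rightarrow(1)$ without that verification.
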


\begin{proof}
The implication \eqref{i3:Nash-crepant-sing} $\Rightarrow$ \eqref{i2:Nash-crepant-sing}
follows by \cite[Corollary 1.4.3]{BCHM10},
the implication \eqref{i2:Nash-crepant-sing} $\Rightarrow$ \eqref{i1:Nash-crepant-sing}
follows by \cite[Theorem~1.1]{dFD16},
and the implication \eqref{i1:Nash-crepant-sing} $\Rightarrow$ \eqref{i3:Nash-crepant-sing}
follows by \cref{t:surjective}.
\end{proof}

This result illustrates how this class of singularities
preserves some of the properties that characterize Du Val singularities. 
By \cite[Corollary 1.4.3]{BCHM10}, there is a terminal model $Y \to X$
whose exceptional locus consists exactly of the divisors
with log discrepancy 1 over $X$; from this perspective,
this model should be regarded as the analogue
of the minimal resolution of a Du Val singularity. 
Needless to say, it would be interesting to
further study the structure of these higher dimensional singularities.

\section{Higher compound Du Val singularities}

In this section, we look again at rational singularities of maximal embedding codimension.
We recall that these are normal, isolated, locally complete intersection singularities.
A particular example of such singularities is given by isolated compound Du Val singularities.
Compound Du Val singularities were originally introduced in dimension three in \cite{Rei83}. 
In general, they are defined as follows. 

\begin{definition}
\label{d:cDV}
We say that $x \in X$ is a \emph{compound Du Val} (cDV) \emph{singularity} if
the surface $S \subset X$ cut out by $\dim(X)-2$ general hyperplane sections through $x$
has a Du Val singularity at $x$. 
\end{definition}

The following property characterizes isolated cDV singularities 
(cf.\ \cite{Mar96} for an earlier result in this direction in dimension three).

\begin{proposition}
\label{p:cDV}
Let $x \in X$ be an isolated hypersurface singularity of dimension $d \ge 3$. Then the following are equivalent:
\begin{enumerate}
\item
\label{eq1:cDV}
$x \in X$ is a cDV singularity.
\item
\label{eq2:cDV}
$\mld_x(X) = d-1$, and for every divisor $E$ over $X$ computing $\mld_x(X)$ we have
$\ord_E(\fm_x) = 1$ and $E$ computes $\mld_x(X,(d-2)\{x\})$. 
\end{enumerate}
In particular, isolated cDV singularities
are normal locally complete intersection singularities of maximal embedding codimension,
according to \cref{d:max-ecodim}.
\end{proposition}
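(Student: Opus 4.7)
The plan is to reduce the problem to the surface case via iterated inversion of adjunction, exploiting the fact that a normal surface singularity is Du Val precisely when its minimal log discrepancy equals $1$. Embed $X$ locally as a hypersurface in $A := \A^{d+1}$ at $x$, and for general hyperplanes $H_1,\dots,H_{d-2} \subset A$ through $x$ set $Y := H_1 \cap \cdots \cap H_{d-2}$ and $S := X \cap Y$, so that $S$ is a hypersurface in the smooth $3$-fold $Y$. My first step would be to establish the identity
\[
\mld_x(S) \;=\; \mld_x\bigl(X,(d-2)\{x\}\bigr).
\]
Applying \cref{t:inv-adj} to the complete intersection $Y \subset A$ of codimension $d-2$ yields $\mld_x(S) = \mld_x(Y,S) = \mld_x(A,\, H_1 + \cdots + H_{d-2} + X)$, while applying it to the hypersurface $X \subset A$ yields $\mld_x(X,(d-2)\{x\}) = \mld_x(A,\, X + (d-2)\{x\})$. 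These two right-hand sides agree once one observes that only finitely many divisors $F$ lying on a fixed log resolution of $(A, X + \fm_x)$ are relevant to computing the mld, and that for general $H_i$ one has $\ord_F(H_i) = \ord_F(\fm_x)$ for each such $F$. Since Du Val singularities are exactly the normal surface singularities with $\mld_x(S) = 1$, this shows that $x \in X$ is cDV if and only if $\mld_x(X,(d-2)\{x\}) = 1$.

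With this identity in hand, the remainder is bookkeeping with the formula $a_E(X,(d-2)\{x\}) = a_E(X) - (d-2)\ord_E(\fm_x)$. For any prime divisor $E$ over $X$ with $c_X(E) = x$, the inequality $\ord_E(\fm_x) \ge 1$ gives $a_E(X) \ge a_E(X,(d-2)\{x\}) + (d-2)$. For the implication (1) $\Rightarrow$ (2), the cDV hypothesis supplies $\mld_x(X,(d-2)\{x\}) = 1$, and hence $\mld_x(X) \ge d-1$; for the opposite inequality, a Du Val singularity has multiplicity $2$, forcing $\mult_x(X) = 2$, so the exceptional divisor $E_0$ of $\Bl_x A \to A$ satisfies $a_{E_0}(A, X) = (d+1) - 2 = d-1$, which by inversion of adjunction yields $\mld_x(X) \le d-1$. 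Hence $\mld_x(X) = d-1$, and for any $E$ realizing this value the estimate $(d-1) - (d-2)\ord_E(\fm_x) = a_E(X,(d-2)\{x\}) \ge 1$ combined with $\ord_E(\fm_x) \ge 1$ forces $\ord_E(\fm_x) = 1$ and $a_E(X,(d-2)\{x\}) = 1$, so $E$ also computes $\mld_x(X,(d-2)\{x\})$.

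For the reverse implication (2) $\Rightarrow$ (1), pick any $E$ computing $\mld_x(X) = d-1$; by hypothesis, $\mld_x(X,(d-2)\{x\}) = a_E(X,(d-2)\{x\}) = (d-1) - (d-2) = 1$, and so $x \in X$ is cDV by the identity of the first step. The final assertion about maximal embedding codimension is then immediate: an isolated hypersurface singularity has $\embcodim(\O_{X,x}) = 1$, which matches $\dim(\O_{X,x}) - \mld_x(X) = d - (d-1) = 1$. The main technical delicacy in the proof is the uniform Bertini-type identification $\ord_F(H_i) = \ord_F(\fm_x)$ across all divisors $F$ that enter the mld computation, which is why one must first reduce to a finite collection of $F$ on a log resolution before picking the $H_i$ generically.
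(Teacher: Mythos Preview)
Your proof is correct and rests on the same two ingredients as the paper's: inversion of adjunction to pass between the surface $S$ and a pair on the larger variety, together with the genericity observation that $\ord_F(H_i)=\ord_F(\fm_x)$ for the finitely many divisors $F$ relevant to the mld. The paper's route is slightly more direct: instead of sending both sides through the ambient $A=\A^{d+1}$ and matching them there, it applies \cref{t:inv-adj} once to the inclusion $S\subset X$ to get $\mld_x(S)=\mld_x(X,(d-2)S)$, and then compares this with $\mld_x(X,(d-2)\{x\})$ via the same genericity. That avoids the pair (boundary) version of inversion of adjunction your detour through $A$ requires, using only the form already stated in \cref{t:inv-adj}. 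Your separate step deducing $\mld_x(X)\le d-1$ from $\mult_x(X)=2$ is also correct but unnecessary: the paper just records that this bound holds at any singular hypersurface point (it is the case $e=1$ of \cref{p:mld-bound}).
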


\begin{proof}
First note that if $x \in X$ is a normal locally complete intersection singularity, 
then, by \cref{p:mld-bound}, 
we have $\mld_x(X) \le d-1$ and $\ord_E(\fm_x) \ge 1$ for any divisor $E$ over $X$ with center $x$. 
On the other hand, if $S$ is cut out by $d-2$ general hyperplane sections through $x$, 
then $\mld_x(S) \le 1$, and $x \in S$ is a Du Val singularity if and only if $\mld_x(S) = 1$. 

Assume \eqref{eq1:cDV} holds. 
If $S$ is cut out by general hyperplane sections as in \cref{d:cDV}, then 
$\ord_E(\I_S) = \ord_E(\fm_x)$ for any $E$ computing $\mld_x(X)$ and
\[
1 = \mld_x(S) = \mld_x(X,(d-2)S) \le a_E(X,(d-2)S) = \mld_x(X) - (d-2) \ord_E(\fm_x)
\]
by inversion of adjunction (\cref{c:inv-adj}).
The properties listed in \eqref{eq2:cDV} follows easily from this inequality.

Conversely, if \eqref{eq2:cDV} holds and $E$ is any divisor computing $\mld_x(X)$, then we have 
\[
\mld_x(S) = \mld_x(X,(d-2)S) = a_E(X,(d-2)S) = a_E(X,(d-2)\{x\}) = 1,
\]
hence $S$ is a Du Val singularity. Here we used again that $S$ is cut out by general 
hyperplane sections through $x$, hence $\ord_E(\I_S) = \ord_E(\fm_x)$. 
\end{proof}

\cref{p:cDV} implies in particular that cDV singularities are examples of rational
singularities of maximal embedding codimension. However, they 
satisfy an additional property, namely, the condition that 
for every divisor $E$ over $X$ computing $\mld_x(X)$ we have
$\ord_E(\fm_x) = 1$ and $E$ computes $\mld_x(X,(d-2)\{x\})$. 
It is not clear to us whether this condition might follow from the definition
of singularity of maximal embedding codimension.

By regarding hDV singularities as a higher dimensional 
version of Du Val singularities, we extend the notion of cDV singularity in the following way.

\begin{definition}
We say that $x \in X$ is a \emph{higher compound Du Val} (hcDV) \emph{singularity} if, for some $r \ge 0$, 
the variety $Y \subset X$ cut out by $r$ general hyperplane sections through $x$
has a hDV singularity at $x$. 
(Alternatively, one could call these singularities \emph{compound higher Du Val singularities.})
\end{definition}

A straightforward adaptation of \cref{p:cDV} gives the following property. 

\begin{proposition}
\label{p:compound-max-ecodim}
Let $x \in X$ be an isolated locally complete intersection
singularity of dimension $d \ge 3$ and embedding codimension $e$. 
Then the following are equivalent:
\begin{enumerate}
\item
\label{eq1:hcDV}
$x \in X$ is a hcDV singularity.
\item
\label{eq2:hcDV}
$\mld_x(X) = d-e$, and for every divisor $E$ over $X$ computing $\mld_x(X)$ we have
$\ord_E(\fm_x) = 1$ and $E$ computes $\mld_x(X,(d-e-1)\{x\})$. 
\end{enumerate}
In particular, isolated hcDV singularities
are normal locally complete intersection singularities of maximal embedding codimension,
according to \cref{d:max-ecodim}.
\end{proposition}

\begin{theorem}
\label{t:cDV}
Let $x \in X$ be an isolated hcDV singularity. 
Then the function $\Psi_m^x$ is surjective, hence a bijection, for all $m \gg 1$.
\end{theorem}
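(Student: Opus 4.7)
The plan is to adapt the proof of \cref{t:surjective} essentially verbatim, with $\mld_x(X) = d-e$ playing the role that the value $1$ played in the hDV case. The key input, via \cref{p:compound-max-ecodim}, is that an isolated hcDV singularity has maximal embedding codimension with $\mld_x(X) = d-e$, where $d = \dim(\O_{X,x})$ and $e = \embcodim(\O_{X,x})$; moreover $\mld_x(X) \ge 1$, since slicing by $r = d-e-1 \ge 0$ general hyperplane sections produces a hDV singularity, which by definition has dimension at least $2$. Preserving the positivity of $\mld_x(X)$ is the one feature of the hDV proof that must be tracked, and it is exactly what makes the argument close up.

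First, I would use \cite[Theorem~3.15]{CdFD22} to embed $X \subset A = \A^{d+e}$ locally near $x$ as a complete intersection cut out by $f_1,\dots,f_e$. Maximal embedding codimension gives $X_1^x = A_1^x$, so the first Hasse--Schmidt derivatives $f_i^{(1)}$ vanish on $A_m^x$, and the ideal of $X_m^x$ in $A_m^x$ is generated by $f_i^{(j)}$ for $1 \le i \le e$ and $2 \le j \le m$. For any irreducible component $D$ of $X_m^x$, this yields
\[
\codim(D, A_m) \le e(m-1) + (d+e) = em + d.
\]

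Next, I would pass to the (irreducible) cylinder $V = (\ff_m^A)^{-1}(D) \subset A_\infty$ and let $v = p\ord_F$ be the divisorial valuation at its generic point. From $V \subset C_A(v)$, together with $v(\I_X) \ge m+1$ and the bound above, I obtain
\[
p\,a_F(A, eX) \le (em+d) - e(m+1) = d - e = \mld_x(X).
\]
Inversion of adjunction (\cref{t:inv-adj}) gives $a_F(A, eX) \ge \mld_x(X) > 0$, forcing every inequality to be an equality: $p = 1$, $a_F(A, eX) = \mld_x(X)$, and $V = C_A(v)$. Following the proof of \cref{t:inv-adj} together with \cref{r:inv-adj=}, I then extract a non-degenerate irreducible component $W$ of $V \cap X_\infty$, with $W = C_X(E)$ for some prime divisor $E$ over $X$ satisfying $c_X(E) = x$ and $a_E(X) = \mld_x(X)$.

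Finally, \cite[Corollary~1.4.3]{BCHM10} realizes $\ord_E$ as a terminal valuation, and \cite[Theorem~1.1]{dFD16} upgrades it to a Nash valuation; hence $W = C_X(E)$ is an irreducible component of $X_\infty^{\Sing X}$, which equals $X_\infty^x$ by isolatedness. Once $m$ is large enough that $\Psi_m^x$ is single-valued (\cref{t:welldefined-injective}), the containment $\ff_m^X(W) \subset D$ forces $\Psi_m^x(W) = D$, giving surjectivity; combined with injectivity from \cref{t:welldefined-injective} this yields the claimed bijection. The main thing to verify is that the codimension arithmetic still closes up with $\mld_x(X) = d-e$ in place of $1$; the hard conceptual work has already been done in \cref{t:inv-adj} and \cref{t:surjective}, and no new ingredient is needed.
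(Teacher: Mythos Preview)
There is a genuine gap at the step where you invoke \cite[Corollary~1.4.3]{BCHM10} to realize $\ord_E$ as a terminal valuation. In the hDV case this works because $a_E(X) = 1$, so $E$ is crepant and is extracted on a terminal model. But in the strictly hcDV case $\mld_x(X) = d-e > 1$, and the divisor $E$ you produce has $a_E(X) = d-e > 1$. Since $x$ is an isolated singularity and $\mld_x(X) > 1$, the variety $X$ is already terminal near $x$; any terminal model over $X$ is therefore small in a neighborhood of $x$ and carries no exceptional divisor with center $x$. Thus $\ord_E$ is \emph{not} a terminal valuation, \cite{dFD16} gives you nothing, and you have no way to conclude that $W = C_X(E)$ is an irreducible component of $X_\infty^x$. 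The paper flags exactly this obstruction.

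The paper's fix is not cosmetic: it genuinely uses the hyperplane slice $Y = X \cap H$ with its hDV singularity. One runs the cylinder argument in parallel on $A$ and on $H$, obtaining $W = C_X(E)$ inside $X_\infty^x$ and $W' = C_Y(E')$ inside $Y_\infty^x$, the latter being a Nash component by the hDV case. Then, instead of proving that $W$ itself is a Nash component, one shows that for \emph{any} irreducible component $Z \supset W$ of $X_\infty^x$ one still has $\ff_m^X(Z) \subset D$. This last step requires comparing Mather log discrepancies of $E$ and $E'$ via Teissier's idealistic Bertini theorem, checking that the hyperplane equations $h_i^{(j)}$ form a regular sequence at the relevant generic points of the jet schemes, and using the already-established injectivity of $\Psi_m^x$ on $Y$ to rule out $\ff_m^X(Z)$ escaping into a different component of $X_m^x$. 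Your proposal supplies none of this, and the ``codimension arithmetic closes up'' observation, while correct, only gets you as far as producing $W$; it does not finish the argument.
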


\begin{proof}
With the case of hDV singularities already settled in \cref{t:surjective}, we may assume that $\mld_x(X) > 1$. 
Let $d = \dim(X)$ and $e = \embcodim(\O_{X,x})$. Note that $\mld_x(X) = d-e$.
As in the proof of \cref{t:surjective}, for simplicity we
reduce to the case where $X$ is embedded in $A := \A^{d+e}$.
Let $H := \A^{2e+1} \subset A$ a general linear subspace of codimension $d-e - 1$ through $x$, 
so that $Y := X \cap H$ is a variety with a hDV singularity at $x$. 

Let $m$ be any positive integer such that:
\begin{enumerate}
\item
\label{item-i}
\cref{t:welldefined-injective} holds for $Y$ (with $\S = \{x\}$), and
\item
\label{item-ii}
for every divisor $E$ over $X$ computing $\mld_x(X)$, we have
\[
d(m+1) - \dim(\ff_m^X(C_X(E))) = \jetcodim(C_X(E),X_\infty).
\]
\end{enumerate}
Note that these conditions hold for all $m \gg 1$. 
We can guarantee \eqref{item-i} because there are only finitely many divisorial valuations
computing $\mld_x(X)$ since the minimal log discrepancy is positive. 

Let $D$ be an irreducible component of $X_m^x$, and pick an irreducible component
$D'$ of $D \cap Y_m^x$. If $h_1,\dots,h_{d-e-1}$ are linear forms on $A$ cutting out $H$, 
then $D \cap Y_m^x$ is cut out off $D$ by the equations
$h_i^{(j)} = 0$ for $1 \le i \le d-e-1$ and $1 \le j \le m$, hence 
\[
\codim(D',D) \le (d-e-1)m.
\]
If $f_1=\dots=f_e = 0$ are local equations of $X$ at $x$ in $A$, then $X_m^x$ is cut out in $A_m^x$
by the equations $f_i^{(j)} = 0$ for $1 \le i \le e$ and $2 \le j \le m$. Here we are using that $X$
is singular at $x$ hence, for all $i$, both $f_i$ and $f'_i$ vanish identically on $A_m^x$. 
This implies that 
\[
\codim(D,A_m^x) \le e(m-1).
\]
Since $\codim(H_m^x,A_m^x) = (d-e-1)m$, we obtain 
\[
\codim(D',H_m^x) \le e(m-1),
\] 
hence 
\[
\codim(D',H_m) \le e(m+1) + 1.
\]

Let $V' \subset H_\infty$ the cylinder over $D'$. 
We have
\[
\codim(V',H_\infty) \le e(m+1) + 1.
\]
Write $\ord_{V'} = p'\ord_{F'}$ for some divisor $F'$ over $H$ and some positive integer $p'$.
The same argument as in the proof of \cref{t:surjective} implies
\[
1 = \mld_x(Y) = \mld_x(H,e Y) \le \frac 1p \big( \codim(V',H_\infty) - e(m+1) \big) \le 1.
\]
This implies that $p'=1$, $V' = C_H(F')$, and $F'$ computes $\mld_x(H,e Y)$.
If $W' \subset Y_\infty$ is any non-degenerate irreducible component of $V' \cap Y_\infty$,
then the argument also shows that $W'$ is an irreducible component of $Y_\infty^x$ and
it is equal to $C_Y(E')$ for some divisor $E'$ over $Y$ with $a_{E'}(Y) = 1$.
Furthermore, the argument implies that all inequalities above are equalities. 

In particular, if $V \subset A_\infty$ is the cylinder over $D$ then
\[
\codim(V,A_\infty) = e m+d.
\]
Writing $\ord_V = p\ord_F$ for some divisor $F$ over $A$ and
arguing again as in the proof of \cref{t:surjective} (using now that, by \cref{p:compound-max-ecodim}, 
$\mld_x(A,e X) = d-e$), we 
conclude that $V = C_X(F)$ where $F$ is a divisor over $A$ computing $\mld_x(A,X)$.
Moreover, there is an irreducible component $W$ of $V \cap X_\infty$
that is not contained in $(\Sing X)_\infty$, 
and this component is of the form $W = C_X(E)$ for a divisor $E$ over $X$ computing $\mld_x(X)$. 

By construction, 
\[
\ff_m^X(W) \subset D.
\]
We do not know, however, that $W$ is an irreducible component of $X_\infty^x$.
Note that we cannot apply \cite{dFD16} as we did in the proof of \cref{t:surjective}
(and, above, for $W'$) since now $E$ does not define a terminal valuation over $X$.
The claim is that $Z \subset X_\infty^x$ is any irreducible component containing $W$, then 
\[
\ff_m^X(Z) \subset D.
\]
This is all we need to conclude that $D$ is in the image of $\Psi_m^x$. 

To prove the claim, we proceed as follows. 
First, note that $W' \subset W \cap Y_\infty$.
As discussed above, we have $W = C_X(E)$ and $W' = C_S(E')$
where $E$ and $E'$ are divisors 
over $X$ and $S$, respectively, with center $x$ and log discrepancies
$a_E(X) = d - e$ and $a_{E'}(X) = 1$. In particular, 
\[
a_{E'}(X) = a_E(X) - (d - e - 1).
\]
Since $X$ and $S$ are locally complete intersections at $x$, we have
\begin{gather*}
a_E(X) = \^a_E(X) - \ord_E(\Jac_X), \\
a_{E'}(Y) = \^a_{E'}(Y) - \ord_{E'}(\Jac_Y)
\end{gather*}
by \cite[Corollary~3.5]{dFD14}.
By Teissier's Idealistic Bertini Theorem \cite[2.15~Corollary~3]{Tei77}, we have
$\ov{\Jac_Y} = \ov{\Jac_X|_Y}$
(the bar denoting integral closure), hence it follows by the inclusion $W' \subset W \cap Y_\infty$
that
\[
\ord_{E'}(\Jac_Y) \ge \ord_E(\Jac_X).
\]
Combining these formulas, we see that
\[
\^a_{E'}(Y) \ge \^a_E(X) - (d-e-1).
\]

By \cite{dFEI08} and the assumption \eqref{item-ii} on our choice of $m$, we have 
\begin{gather*}
\^a_E(X) = d(m+1) - \dim(\ff_m^X(W)),\\
\^a_{E'}(Y) \le (e+1)(m+1) - \dim(\ff_m^Y(W')).
\end{gather*}
Using the previous inequality, we get
\[
\dim(\ff_m^Y(W')) \le \dim(\ff_m^X(W)) - (d-e-1)n.
\]

Observe that $\ff_m^Y(W')$ is contained in $\ff_m^X(W) \cap Y_m^x$, which is cut out
from $\ff_m^X(W)$ by the equations 
$h_i^{(j)} = 0$ for $1 \le i \le d-e-1$ and $1 \le j \le m$. 
Here we are using that the polynomials $h_i$ already vanish on $X_m^x$, hence on $\ff_m^X(W)$.
It follows that
\[
\dim(\ff_m^Y(W')) = \dim(\ff_m^X(W)) - (d-e-1)m,
\]
and the $h_i^{(j)}$ form a regular sequence at the generic point of $\ff_m^Y(W')$. 

Now, let $Z$ be an irreducible component of $X_\infty^x$ containing $W$, and 
assume by contradiction that $\ff_m^X(Z) \not\subset D$. 
Then $\ff_m^X(Z)$ must be contained in another irreducible component of $X_m^x$. 
In particular, if $\~D$ denote the union of all irreducible components of $X_m^x$
containing $\ff_m^Y(W')$ and different from $D$, then 
%\[
%\ff_m^X(Z) \subset \~D.
%\]
%From the inclusions
\[
\ff_m^Y(W') \subset D \cap \~D.
\]
Note that $(D \cup \~D) \cap Y_m^x$ is the union of the irreducible components of $Y_m^x$
containing $\ff_m^Y(W')$. 
Since the elements $h_i^{(j)}$ form a regular sequence at each generic point of $D \cap \~D$
and cut out $Y_m^x$ on $X_m^x$, it follows that $(D \cup \~D) \cap Y_m^x$ must be reducible.
This means that $\ff_m^Y(W')$ is contained in more than one irreducible component of $Y_m^x$, 
contradicting \cref{t:welldefined-injective}, which is supposed to holds for $Y$
by our assumption \eqref{item-i} on $m$.

We conclude that $\ff_m^X(Z) \subset D$, as claimed. This finishes the proof of the theorem. 
\end{proof}

\section{The graph generated by families of jets}

Following \cite{Mou14,Mou17,CM21}, to any variety $X$ we 
associate a directed graph $\G_X$ as follows.

\begin{definition}
Given a variety $X$, let $\G_X$ be the directed graph whose vertices 
corresponds to the irreducible components of $X_m^{\Sing X}$
for $m \ge 0$; an edge is drawn from a vertex $v$ to a vertex $v'$
whenever $v$ and $v'$ correspond, respectively, to irreducible components
$D \subset X_m^{\Sing X}$ and $D' \subset X_{m+1}^{\Sing X}$
with $\p_{m+1,m}(D') \subset D$. 
We say that a vertex $v$ has \emph{order $m$}, and write $\ord(v) = m$, if
$v$ corresponds to an irreducible component of $X_m^{\Sing X}$.
The orientation is defined by the order of the vertices.
For every $m$, we denote by $\G_X^{\ge m}$ and $\G_X^{\le m}$
the subgraphs of $\G_X$ obtained by removing 
all vertices of order $< m$, respectively, $> m$. 
We call the \emph{root} of $\G_X$ the set of vertices of order zero.
For any vertex $v$ of $\G_X$, the \emph{branch} of $\G_X$ stemming from $v$ 
is the subgraph  $\G_X^{\ge v}$ obtained by removing all
vertices that are not reachable by $v$. 
\end{definition}

By construction $\G_X$ is a directed acyclic graph, that is, a directed graph with no directed cycles.
Due to the finiteness of the irreducible components
of $X_m^{\Sing X}$, this graph has finitely many vertices
of any given order. In particular, $\G_X^{\le m}$ is finite for every $m$. 

\begin{corollary}
\label{c:graph-crepant-sing}
Let $X$ be a variety with isolated hcDV singularities, and let
$\G_X$ be the associated graph.
\begin{enumerate}
\item
\label{i1:graph-crepant-sing}
\emph{(Root)}.
The root of $\G_X$ is in natural bijection with the singular points of $X$. 
Each root in contained in a distinct connected component of $\G_X$. 
\item
\label{i2:graph-crepant-sing}
\emph{(Finite branches)}.
There are no finite branches in $\G_X$ beyond a certain order.
That is, 
there is an integer $m_0$ such that for every vertex $v$ of $\G_X$ 
of order $\ord(v) \ge m_0$ and every $m \ge \ord(v)$, 
there exists a vertex $u$ of order $m$
%(which belongs to one of the chains identified in \eqref{i1:graph-crepant-sing})
that is reachable by $v$. 
\item
\label{i3:graph-crepant-sing}
\emph{(Infinite branches)}.
The infinite branches of $\G_X$ are in bijection with the Nash valuations on $X$.
More precisely, for $m \gg 1$, the subgraph $\G_X^{\ge m} \subset \G_X$ is a disjoint union of 
infinite chains whose vertices have increasing orders $m, m+1, m+2, \dots$.
The number of chains is the number of Nash valuations on $X$, and
each chain is in natural correspondence with a distinct Nash valuation.
\end{enumerate}
In particular, for $m \ge 1$
the number of irreducible components of $X_m^{\Sing X}$ is equal 
to the number of irreducible components of $X_\infty^{\Sing X}$, 
and the function $\Psi_m^{\Sing X}$ is a bijection.
\end{corollary}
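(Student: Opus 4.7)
The plan is to reduce everything to the local picture at each singular point of $X$, where \cref{t:cDV} provides bijectivity of $\Psi_m^x$ for $m \gg 1$, and then read off the graph structure from the well-definedness and injectivity in \cref{p:welldefined,p:injective}. Since hcDV singularities are isolated, $\Sing X$ is a finite discrete set of closed points, so for every $m$ the scheme $X_m^{\Sing X}$ decomposes as a disjoint union $\bigsqcup_{x \in \Sing X} X_m^x$ of closed subschemes, compatibly with the truncations $\p_{m+1,m}$. Consequently $\G_X$ splits as a disjoint union $\bigsqcup_{x \in \Sing X} \G_X^x$, and each $\G_X^x$ has exactly one order-zero vertex, namely the point $x$. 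This immediately yields \eqref{i1:graph-crepant-sing}.

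Next, I would fix a threshold $m_0$ large enough that $\Psi_m^x$ is a bijection for every $x \in \Sing X$ and every $m \ge m_0$; such $m_0$ exists by \cref{t:cDV} and the finiteness of $\Sing X$. To establish \eqref{i2:graph-crepant-sing}, for a vertex $D \subset X_m^x$ with $m \ge m_0$ I would write $D = \Psi_m^x(C)$ for the unique non-degenerate component $C$ of $X_\infty^x$, and set $D_n := \Psi_n^x(C)$ for all $n \ge m$. The image $\p_{n+1,n}(D_{n+1})$ is irreducible and contains $\ff_n^X(C)$, so the uniqueness in \cref{p:welldefined} forces it to sit inside $D_n$, producing an edge $D_n \to D_{n+1}$ and hence an infinite chain of descendants of $D$.

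For \eqref{i3:graph-crepant-sing}, the key step is to show that $\G_X^{\ge m_0}$ contains no edges beyond those just constructed. Any such edge has the form $D \to D''$ with $D = \Psi_m^x(C)$ and $D'' = \Psi_{m+1}^x(C'')$ attached to the same singular point $x$, and the defining inclusion $\p_{m+1,m}(D'') \subset D$ gives $\ff_m^X(C'') \subset D$; the uniqueness in \cref{p:welldefined} then forces $\Psi_m^x(C'') = D$, and injectivity of $\Psi_m^x$ yields $C = C''$. Hence every vertex of $\G_X^{\ge m_0}$ has exactly one outgoing and exactly one incoming edge within the subgraph, so $\G_X^{\ge m_0}$ is a disjoint union of infinite chains in natural bijection with the pairs $(x,C)$, that is, with the Nash valuations on $X$. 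The final counting statement of the corollary is then immediate from the bijectivity of $\Psi_m^{\Sing X}$ for $m \ge m_0$.

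The main obstacle is really only bookkeeping; the substantive input comes entirely from \cref{t:cDV} (bijectivity of $\Psi_m^x$ for $m \gg 1$) and from the well-definedness/injectivity in \cref{p:welldefined,p:injective}. The only point requiring a moment's care is that the threshold $m_0$ can be chosen uniformly over all singular points, and this is automatic since $\Sing X$ is finite under the hcDV hypothesis.
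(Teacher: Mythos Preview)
Your proposal is correct and follows essentially the same approach as the paper: both derive the graph structure from the bijectivity of $\Psi_m^x$ for $m \gg 1$ (via \cref{t:cDV}) together with the compatibility of the maps $\Psi_m$ under truncation, which is exactly the content of \cref{p:welldefined,p:injective}. Your argument is in fact more explicit than the paper's rather terse proof, spelling out carefully why each vertex in $\G_X^{\ge m_0}$ has a unique outgoing and a unique incoming edge; the paper simply asserts that properties \eqref{i2:graph-crepant-sing} and \eqref{i3:graph-crepant-sing} follow from the bijectivity and records the compatibility $\p_{m+1,m}(\Psi_{m+1}^{\Sing X}(C)) \subset \Psi_m^{\Sing X}(C)$ as the one point needing mention.
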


\begin{proof}
Property \eqref{i1:graph-crepant-sing} is clear since
the vertices in the root of $\G_X$ corresponds to the
singular points of $X$, viewed as $0$-jets on $X$.
Properties \eqref{i2:graph-crepant-sing} and \eqref{i3:graph-crepant-sing} 
follow from \cref{t:welldefined-injective,t:cDV}, 
which establish that $\Psi_m^{\Sing X}$ is a bijection for $m \gg 1$. 
The correspondence is defined by associating to each chain of $\G_X^{\ge m}$ 
the unique irreducible component $C$
of $X_\infty^{\Sing X}$ such that for $n \ge m$
its image $\ff_n(C)$ is contained in the irreducible component 
of $X_n^{\Sing X}$ corresponding to the vertex of order $n$ in the given chain. 

Implicit in these arguments is the compatibility of the functions
$\Psi_m^{\Sing X}$ as $m$ varies. Specifically, in the range of application of
\cref{t:welldefined-injective}, if $D = \Phi_m^{\Sing X}(C)$ 
and $D' = \Phi_{m+1}^{\Sing X}(C)$, then it follows by the geometric definition
of these functions and their injectivity that $\p_{m+1,m}(D') \subset D$, 
hence the corresponding vertices $v$ and $v'$ are joined by an edge.
\end{proof}

\begin{remark}
Regarding part \eqref{i2:graph-crepant-sing} of \cref{c:graph-crepant-sing},
we should remark that bounded branching of arbitrary large
order does occur for other singularities (e.g., see \cite{Mou17,CM21}).
As for \eqref{i3:graph-crepant-sing}, 
one can visualize the correspondence as attaching 
one vertex at the end of each chain, with such vertex corresponding 
to the Nash component.
Thinking of the chain as consisting of the integers on $[m,\infty)$, 
with the intervals $[n,n+1]$ representing the edges, 
this is the same as adding $\infty$ to get $[m,\infty]$. 
Note that this extension of $\G_X$ is not a graph, since we want to
see its geometric realization as a connected set but there is no
edge ending at $\infty$.
\end{remark}

\begin{bibdiv}
\begin{biblist}

\bib{Arn72}{article}{
   author={Arnol\cprime d, V. I.},
   title={Normal forms of functions near degenerate critical points, the
   Weyl groups $A_{k},D_{k},E_{k}$ and Lagrangian singularities},
   language={Russian},
   journal={Funkcional. Anal. i Prilo\v{z}en.},
   volume={6},
   date={1972},
   number={4},
   pages={3--25},
%   issn={0374-1990},
%   review={\MR{0356124}},
}

\bib{BCHM10}{article}{
   author={Birkar, Caucher},
   author={Cascini, Paolo},
   author={Hacon, Christopher D.},
   author={McKernan, James},
   title={Existence of minimal models for varieties of log general type},
   journal={J. Amer. Math. Soc.},
   volume={23},
   date={2010},
   number={2},
   pages={405--468},
%   issn={0894-0347},
%   review={\MR{2601039}},
%   doi={10.1090/S0894-0347-09-00649-3},
}

\bib{BdlBdLFdBP22}{article}{
    author={Budur, Nero},
    author={de la Bodega, Javier},
    author={de Lorenzo Poza, Eduardo},
    author={Fern\'andez de Bobadilla, Javier},
    author={Pe\l ka, Tomasz},
    title={On the embedded Nash problem},
    year={2022},
    %eprinttype={arXiv},
    eprint={2208.08546},
}

\bib{Bur74}{article}{
   author={Burns, D.},
   title={On rational singularities in dimensions $>2$},
   journal={Math. Ann.},
   volume={211},
   date={1974},
   pages={237--244},
%   issn={0025-5831},
%   review={\MR{364672}},
%   doi={10.1007/BF01350716},
}

\bib{CdFD22}{article}{
   author={Chiu, Christopher},
   author={de Fernex, Tommaso},
   author={Docampo, Roi},
   title={Embedding codimension of the space of arcs},
   journal={Forum Math. Pi},
   volume={10},
   date={2022},
   pages={Paper No. e4, 37},
%   review={\MR{4386350}},
%   doi={10.1017/fmp.2021.19},
}

\bib{CM21}{article}{
   author={Cobo, Helena},
   author={Mourtada, Hussein},
   title={Jet schemes of quasi-ordinary surface singularities},
   journal={Nagoya Math. J.},
   volume={242},
   date={2021},
   pages={77--164},
%   issn={0027-7630},
%   review={\MR{4250734}},
%   doi={10.1017/nmj.2019.26},
}

\bib{dF13}{article}{
   author={de Fernex, Tommaso},
   title={Three-dimensional counter-examples to the Nash problem},
   journal={Compos. Math.},
   volume={149},
   date={2013},
   number={9},
   pages={1519--1534},
%   issn={0010-437X},
%   review={\MR{3109732}},
%   doi={10.1112/S0010437X13007252},
}

\bib{dF18}{article}{
   author={de Fernex, Tommaso},
   title={The space of arcs of an algebraic variety},
   conference={
      title={Algebraic geometry: Salt Lake City 2015},
   },
   book={
      series={Proc. Sympos. Pure Math.},
      volume={97},
      publisher={Amer. Math. Soc., Providence, RI},
   },
   date={2018},
   pages={169--197},
%   review={\MR{3821149}},
}

\bib{dFD14}{article}{
   author={de Fernex, Tommaso},
   author={Docampo, Roi},
   title={Jacobian discrepancies and rational singularities},
   journal={J. Eur. Math. Soc. (JEMS)},
   volume={16},
   date={2014},
   number={1},
   pages={165--199},
%   issn={1435-9855},
%   review={\MR{3141731}},
%   doi={10.4171/JEMS/430},
}
		
\bib{dFD16}{article}{
   author={de Fernex, Tommaso},
   author={Docampo, Roi},
   title={Terminal valuations and the Nash problem},
   journal={Invent. Math.},
   volume={203},
   date={2016},
   number={1},
   pages={303--331},
%   issn={0020-9910},
%   review={\MR{3437873}},
%   doi={10.1007/s00222-015-0597-5},
}

\bib{dFEI08}{article}{
   author={de Fernex, Tommaso},
   author={Ein, Lawrence},
   author={Ishii, Shihoko},
   title={Divisorial valuations via arcs},
   journal={Publ. Res. Inst. Math. Sci.},
   volume={44},
   date={2008},
   number={2},
   pages={425--448},
%   issn={0034-5318},
%   review={\MR{2426354}},
%   doi={10.2977/prims/1210167333},
}

\bib{dFEM03}{article}{
   author={de Fernex, Tommaso},
   author={Ein, Lawrence},
   author={Musta\c{t}\u{a}, Mircea},
   title={Bounds for log canonical thresholds with applications to
   birational rigidity},
   journal={Math. Res. Lett.},
   volume={10},
   date={2003},
   number={2-3},
   pages={219--236},
%   issn={1073-2780},
%   review={\MR{1981899}},
%   doi={10.4310/MRL.2003.v10.n2.a9},
}

\bib{EM04}{article}{
   author={Ein, Lawrence},
   author={Musta\c{t}\v{a}, Mircea},
   title={Inversion of adjunction for local complete intersection varieties},
   journal={Amer. J. Math.},
   volume={126},
   date={2004},
   number={6},
%   pages={1355--1365},
%   issn={0002-9327},
%   review={\MR{2102399}},
}

\bib{ELM04}{article}{
   author={Ein, Lawrence},
   author={Lazarsfeld, Robert},
   author={Musta\c{t}\v{a}, Mircea},
   title={Contact loci in arc spaces},
   journal={Compos. Math.},
   volume={140},
   date={2004},
   number={5},
   pages={1229--1244},
%   issn={0010-437X},
%   review={\MR{2081163}},
%   doi={10.1112/S0010437X04000429},
}

\bib{EM09}{article}{
   author={Ein, Lawrence},
   author={Musta\c{t}\u{a}, Mircea},
   title={Jet schemes and singularities},
   conference={
      title={Algebraic geometry---Seattle 2005. Part 2},
   },
   book={
      series={Proc. Sympos. Pure Math.},
      volume={80},
      publisher={Amer. Math. Soc., Providence, RI},
   },
   date={2009},
   pages={505--546},
%   review={\MR{2483946}},
%   doi={10.1090/pspum/080.2/2483946},
}

%\bib{EMY03}{article}{
%   author={Ein, Lawrence},
%   author={Musta\c{t}\u{a}, Mircea},
%   author={Yasuda, Takehiko},
%   title={Jet schemes, log discrepancies and inversion of adjunction},
%   journal={Invent. Math.},
%   volume={153},
%   date={2003},
%   number={3},
%   pages={519--535},
%%   issn={0020-9910},
%%   review={\MR{2000468}},
%%   doi={10.1007/s00222-003-0298-3},
%}
		
\bib{FdBPP12}{article}{
   author={Fern\'{a}ndez de Bobadilla, Javier},
   author={Pereira, Mar\'{\i}a Pe},
   title={The Nash problem for surfaces},
   journal={Ann. of Math. (2)},
   volume={176},
   date={2012},
   number={3},
   pages={2003--2029},
%   issn={0003-486X},
%   review={\MR{2979864}},
%   doi={10.4007/annals.2012.176.3.11},
}

\bib{FdBPPPP17}{article}{
   author={Fern\'{a}ndez de Bobadilla, Javier},
   author={Pe Pereira, Mar\'{\i}a},
   author={Popescu-Pampu, Patrick},
   title={On the generalized Nash problem for smooth germs and adjacencies
   of curve singularities},
   journal={Adv. Math.},
   volume={320},
   date={2017},
   pages={1269--1317},
%   issn={0001-8708},
%   review={\MR{3709136}},
%   doi={10.1016/j.aim.2017.09.035},
}

\bib{Gre66}{article}{
   author={Greenberg, Marvin J.},
   title={Rational points in Henselian discrete valuation rings},
   journal={Inst. Hautes \'{E}tudes Sci. Publ. Math.},
   number={31},
   date={1966},
   pages={59--64},
%   issn={0073-8301},
%   review={\MR{207700}},
}

\bib{EGAiii_1}{article}{
%   label={EGA\,III$_1$},
   author={Grothendieck, Alexander},
   title={\'El\'ements de g\'eom\'etrie alg\'ebrique. III. \'Etude
   cohomologique des faisceaux coh\'erents. I},
   language={French},
   journal={Inst. Hautes \'Etudes Sci. Publ. Math.},
   number={11},
   date={1961},
   pages={167},
%   issn={0073-8301},
%   review={\MR{0163910 (29 \#1209)}},
}

\bib{How01}{article}{
   author={Howald, J. A.},
   title={Multiplier ideals of monomial ideals},
   journal={Trans. Amer. Math. Soc.},
   volume={353},
   date={2001},
   number={7},
   pages={2665--2671},
%   issn={0002-9947},
%   review={\MR{1828466}},
%   doi={10.1090/S0002-9947-01-02720-9},
}

\bib{Ish08}{article}{
   author={Ishii, Shihoko},
   title={Maximal divisorial sets in arc spaces},
   conference={
      title={Algebraic geometry in East Asia---Hanoi 2005},
   },
   book={
      series={Adv. Stud. Pure Math.},
      volume={50},
      publisher={Math. Soc. Japan, Tokyo},
   },
   date={2008},
   pages={237--249},
%   review={\MR{2409559}},
%   doi={10.2969/aspm/05010237},
}

\bib{IK03}{article}{
   author={Ishii, Shihoko},
   author={Koll\'{a}r, J\'{a}nos},
   title={The Nash problem on arc families of singularities},
   journal={Duke Math. J.},
   volume={120},
   date={2003},
   number={3},
   pages={601--620},
%   issn={0012-7094},
%   review={\MR{2030097}},
%   doi={10.1215/S0012-7094-03-12034-7},
}

\bib{JK13}{article}{
   author={Johnson, Jennifer M.},
   author={Koll\'{a}r, J\'{a}nos},
   title={Arc spaces of $cA$-type singularities},
   journal={J. Singul.},
   volume={7},
   date={2013},
   pages={238--252},
%   review={\MR{3094648}},
%   doi={10.5427/jsing.2013.7m},
}

\bib{Kol73}{book}{
   author={Kolchin, E. R.},
   title={Differential algebra and algebraic groups},
   series={Pure and Applied Mathematics, Vol. 54},
   publisher={Academic Press, New York-London},
   date={1973},
%   pages={xviii+446},
%   review={\MR{0568864}},
}

\bib{Kol97}{article}{
   author={Koll{\'a}r, J{\'a}nos},
   title={Singularities of pairs},
   conference={
      title={Algebraic geometry---Santa Cruz 1995},
   },
   book={
      series={Proc. Sympos. Pure Math.},
      volume={62},
      publisher={Amer. Math. Soc.},
      place={Providence, RI},
   },
   date={1997},
   pages={221--287},
%   review={\MR{1492525 (99m:14033)}},
}

\bib{Kor22}{article}{
   author={Koreeda, Yoshimune},
   title={On the configuration of the singular fibers of jet schemes of
   rational double points},
   journal={Comm. Algebra},
   volume={50},
   date={2022},
   number={4},
   pages={1802--1820},
%   issn={0092-7872},
%   review={\MR{4391525}},
%   doi={10.1080/00927872.2021.1990308},
}

\bib{Lec64}{article}{
   author={Lech, Christer},
   title={Inequalities related to certain couples of local rings},
   journal={Acta Math.},
   volume={112},
   date={1964},
   pages={69--89},
%   issn={0001-5962},
%   review={\MR{161876}},
%   doi={10.1007/BF02391765},
}

\bib{LJMR13}{article}{
   author={Lejeune-Jalabert, Monique},
   author={Mourtada, Hussein},
   author={Reguera, Ana},
   title={Jet schemes and minimal embedded desingularization of plane
   branches},
   journal={Rev. R. Acad. Cienc. Exactas F\'{\i}s. Nat. Ser. A Mat. RACSAM},
   volume={107},
   date={2013},
   number={1},
   pages={145--157},
%   issn={1578-7303},
%   review={\MR{3031267}},
%   doi={10.1007/s13398-012-0091-5},
}

\bib{Loo02}{article}{
   author={Looijenga, Eduard},
   title={Motivic measures},
   note={S\'{e}minaire Bourbaki, Vol. 1999/2000},
   journal={Ast\'{e}risque},
   number={276},
   date={2002},
   pages={267--297},
%   issn={0303-1179},
%   review={\MR{1886763}},
}

\bib{Mar96}{article}{
   author={Markushevich, Dimitri},
   title={Minimal discrepancy for a terminal cDV singularity is $1$},
   journal={J. Math. Sci. Univ. Tokyo},
   volume={3},
   date={1996},
   number={2},
   pages={445--456},
%   issn={1340-5705},
%   review={\MR{1424437}},
}

\bib{Mat89}{book}{
author={Matsumura, Hideyuki},
title={Commutative ring theory},
series={Cambridge Studies in Advanced Mathematics},
volume={8},
edition={2},
note={Translated from the Japanese by M. Reid},
publisher={Cambridge University Press, Cambridge},
date={1989},
%pages={xiv+320},
%isbn={0-521-36764-6},
%review={\MR{1011461}},
}
		
\bib{Mou11}{article}{
   author={Mourtada, Hussein},
   title={Jet schemes of complex plane branches and equisingularity},
   language={English, with English and French summaries},
   journal={Ann. Inst. Fourier (Grenoble)},
   volume={61},
   date={2011},
   number={6},
   pages={2313--2336 (2012)},
%   issn={0373-0956},
%   review={\MR{2976313}},
%   doi={10.5802/aif.2675},
}

\bib{Mou11b}{article}{
   author={Mourtada, Hussein},
   title={Jet schemes of toric surfaces},
   language={English, with English and French summaries},
   journal={C. R. Math. Acad. Sci. Paris},
   volume={349},
   date={2011},
   number={9-10},
   pages={563--566},
%   issn={1631-073X},
%   review={\MR{2802925}},
%   doi={10.1016/j.crma.2011.03.018},
}
		
\bib{Mou14}{article}{
   author={Mourtada, Hussein},
   title={Jet schemes of rational double point singularities},
   conference={
      title={Valuation theory in interaction},
   },
   book={
      series={EMS Ser. Congr. Rep.},
      publisher={Eur. Math. Soc., Z\"{u}rich},
   },
   date={2014},
   pages={373--388},
%   review={\MR{3329041}},
}

\bib{Mou17}{article}{
   author={Mourtada, Hussein},
   title={Jet schemes of normal toric surfaces},
   language={English, with English and French summaries},
   journal={Bull. Soc. Math. France},
   volume={145},
   date={2017},
   number={2},
   pages={237--266},
%   issn={0037-9484},
%   review={\MR{3749785}},
%   doi={10.24033/bsmf.2736},
}
		
\bib{MC18}{article}{
   author={Mourtada, Hussein},
   author={Pl\'{e}nat, Camille},
   title={Jet schemes and minimal toric embedded resolutions of rational
   double point singularities},
   journal={Comm. Algebra},
   volume={46},
   date={2018},
   number={3},
   pages={1314--1332},
%   issn={0092-7872},
%   review={\MR{3780245}},
%   doi={10.1080/00927872.2017.1344695},
}

\bib{Mus01}{article}{
   author={Musta\c{t}\u{a}, Mircea},
   title={Jet schemes of locally complete intersection canonical
   singularities},
   note={With an appendix by David Eisenbud and Edward Frenkel},
   journal={Invent. Math.},
   volume={145},
   date={2001},
   number={3},
   pages={397--424},
%   issn={0020-9910},
%   review={\MR{1856396}},
%   doi={10.1007/s002220100152},
}
		
\bib{Nas95}{article}{
   author={Nash, John F., Jr.},
   title={Arc structure of singularities},
   note={A celebration of John F. Nash, Jr.},
   journal={Duke Math. J.},
   volume={81},
   date={1995},
   number={1},
   pages={31--38 (1996)},
%   issn={0012-7094},
%   review={\MR{1381967}},
%   doi={10.1215/S0012-7094-95-08103-4},
}

\bib{PP13}{article}{
   author={Pe Pereira, Mar{\'{\i}}a},
   title={Nash problem for quotient surface singularities},
   journal={J. Lond. Math. Soc. (2)},
   volume={87},
   date={2013},
   number={1},
   pages={177--203},
%   issn={0024-6107},
%   review={\MR{3022712}},
%   doi={10.1112/jlms/jds037},
}

\bib{Rei83}{article}{
   author={Reid, Miles},
   title={Minimal models of canonical $3$-folds},
   conference={
      title={Algebraic varieties and analytic varieties},
      address={Tokyo},
      date={1981},
   },
   book={
      series={Adv. Stud. Pure Math.},
      volume={1},
      publisher={North-Holland, Amsterdam},
   },
   date={1983},
   pages={131--180},
%   review={\MR{715649}},
%   doi={10.2969/aspm/00110131},
}

%\bib{Reg06}{article}{
%   author={Reguera, Ana J.},
%   title={A curve selection lemma in spaces of arcs and the image of the
%   Nash map},
%   journal={Compos. Math.},
%   volume={142},
%   date={2006},
%   number={1},
%   pages={119--130},
%%   issn={0010-437X},
%%   review={\MR{2197405}},
%%   doi={10.1112/S0010437X05001582},
%}

\bib{Reg12}{article}{
   author={Reguera, Ana J.},
   title={Arcs and wedges on rational surface singularities},
   journal={J. Algebra},
   volume={366},
   date={2012},
   pages={126--164},
%   issn={0021-8693},
%   review={\MR{2942647}},
%   doi={10.1016/j.jalgebra.2012.05.009},
}

%\bib{Reg21}{article}{
%   author={Reguera, Ana J.},
%   title={Corrigendum: A curve selection lemma in spaces of arcs and the
%   image of the Nash map},
%   journal={Compos. Math.},
%   volume={157},
%   date={2021},
%   number={3},
%   pages={641--648},
%%   issn={0010-437X},
%%   review={\MR{4236197}},
%%   doi={10.1112/s0010437x20007733},
%}
		
\bib{SP}{article}{
   label={Stacks}
   author={The {Stacks Project Authors}},
   title={Stacks Project},
   note={\url{http://stacks.math.columbia.edu}},
   year={2023},
}

\bib{Tei77}{article}{
   author={Teissier, Bernard},
   title={The hunting of invariants in the geometry of discriminants},
   conference={
      title={Real and complex singularities (Proc. Ninth Nordic Summer
      School/NAVF Sympos. Math., Oslo, 1976)},
   },
   book={
      publisher={Sijthoff and Noordhoff, Alphen aan den Rijn},
   },
   date={1977},
   pages={565--678},
%   review={\MR{0568901 (58 \#27964)}},
}

\bib{Voj07}{article}{
  author={Vojta, Paul},
   title={Jets via Hasse-Schmidt derivations},
   conference={
      title={Diophantine geometry},
   },
   book={
      series={CRM Series},
      volume={4},
      publisher={Ed. Norm., Pisa},
   },
   date={2007},
   pages={335--361},
%   review={\MR{2349665}},
}

%\bib{WK77}{article}{
%   author={Watanabe, Kimio},
%   author={Kume, Shigeru},
%   title={On certain rational double points in dimensions $>{}2$},
%   journal={Sci. Rep. Tokyo Kyoiku Daigaku Sect. A},
%   volume={13},
%   date={1977},
%   number={366-382},
%   pages={173--180},
%%   issn={0371-3539},
%%   review={\MR{453738}},
%}

\end{biblist}
\end{bibdiv}

\end{document}